\newcommand{\veps}{\varepsilon}
\newcommand{\R}{\mathbb{R}}
\newcommand{\C}{\mathbb{C}}
\newcommand{\N}{\mathbb{N}}
\newcommand{\A}{\mathcal{A}}
\renewcommand{\L}{\mathcal{L}}
\newcommand{\E}{\mathcal{E}}
\newcommand{\Ce}{\mathcal{C}}
\newcommand{\D}{\mathbb{D}}
\newtheorem{defin}{Definition}[section]
\newtheorem{theorem}[defin]{Theorem}
\newtheorem{exa}[defin]{Example}
\newenvironment{example}{\begin{exa}\rm}{\end{exa}}
\newtheorem{lemma}[defin]{Lemma}
\newtheorem{corollary}[defin]{Corollary}
\newenvironment{proof}
{\noindent{\it Proof.}}{\hfill $\Box$\par\vspace{2.5mm}}
\newenvironment{remark}
{\par\vspace{2.5mm}\noindent{\bf Remark.}}{\par\vspace{2.5mm}}
\newtheorem{que}{Question}
\newtheorem{pro}{Problem}
\numberwithin{equation}{section}
\renewcommand{\ps@myheadings}{%
\renewcommand{\@evenhead}%
{{\rm\thepage}\hfil{\sc  J.~Ding, J.~Heittokangas, Z.-T.~Wen}\hfil}%
\renewcommand{\@oddhead}%
{\hfil{{\sc A continuous transition from $\E$-sets to $R$-sets}\hfil{\rm\thepage}}}%
\renewcommand{\@evenfoot}{}%
\renewcommand{\@oddfoot}{}%
}\makeatother \pagestyle{myheadings}
\title{\bf\Large A continuous transition from $\E$-sets to $R$-sets and beyond}
\author{Jie Ding, Janne Heittokangas\footnote{Corresponding author.}$\, $
and Zhi-Tao Wen
   }
\date{}
\begin{document}
\maketitle

\begin{abstract}
The well-known $\E$-sets introduced by Hayman in 1960 are collections of Euclidean discs in the complex plane with the following property: The set of angles $\theta$ for which the ray $\arg(z)=\theta$ meets infinitely many discs in a given $\E$-set has linear measure zero. An important special case of an $\E$-set is known as the $R$-set. These sets appear in numerous papers in the theories of complex differential and functional equations. This paper offers a continuous transition from $\E$-sets to $R$-sets, and then to much thinner sets. In addition to rays, plane curves that originate from the zero distribution theory of exponential polynomials will be considered. It turns out that almost every such curve meets at most finitely many discs in the collection in question. Analogous discussions are provided in the case of the unit disc $\D$, where the curves tend to the boundary $\partial\D$ tangentially or non-tangentially. Finally, these findings will be used for improving well-known estimates for logarithmic derivatives, logarithmic differences and logarithmic $q$-differences of meromorphic functions, as well as for improving standard results on
exceptional sets. 

\medskip
\noindent
\textbf{Key Words:}
$\E$-set, logarithmic derivative, logarithmic difference, non-tangential limit, $R$-set, Stolz angle, tangential limit.

\medskip
\noindent
\textbf{2010 MSC:} Primary 30A10, secondary 30D25.
\end{abstract}

\renewcommand{\thefootnote}{}
\footnotetext[1]{Heittokangas was supported by the V\"ais\"al\"a Fund of the Finnish Academy of Science and Letters. Wen was supported by National Natural Science Foundation of China (No.~11971288) and Shantou University SRFT (NTF18029).}


\section{Introduction}

In 1960 Hayman \cite{Hayman} introduced an $\E$-set as a
countable collection of Euclidean discs $D(z_n,r_n)$ not including the origin, for which $z_n\in\C$, $|z_n|\to\infty$, $r_n>0$, and whose subtending angles at the origin have a finite sum. Any such disc
can be see from the origin at an angle $2\theta_n$,
where $\sin\theta_n=r_n/|z_n|$. Thus, by the definition,
	\begin{equation}\label{E-sum}
	\sum_{n} \frac{r_n}{|z_n|}<\infty.
	\end{equation}
The projection $E$ of such discs onto the interval $[1,\infty)$ has a finite logarithmic measure. This follows from
	\begin{eqnarray*}
	\int_E \frac{dx}{x} &\leq& \sum_{n}\int_{|z_n|-r_n}^{|z_n|+r_n}\frac{dx}{x}+O(1)
	\leq \sum_{n} \frac{2r_n}{|z_n|-r_n}+O(1)\\
	&=&\sum_{n}\frac{2r_n/|z_n|}{1-r_n/|z_n|}+O(1)<\infty,
	\end{eqnarray*}
because $r_n/|z_n|\to 0$ as $n\to\infty$ by \eqref{E-sum}.

A well-known special case of an $\E$-set is an $R$-set \cite[p.~84]{Laine}, which is a collection of Euclidean discs $D(\zeta_n,\rho_n)$, for
which $\zeta_n\in\C$, $|\zeta_n|\to\infty$, $\rho_n>0$, and whose
diameters have a finite sum. Thus, the projection $E$ of an $R$-set onto the positive real axis has a finite linear measure: $\int_E dr<\infty$.

If \eqref{E-sum} holds, then for any $\veps>0$ there exists an $r(\veps)>0$ such that
	$$
	\sum_{|z_n|>r(\veps)} \frac{r_n}{|z_n|}<\veps.
	$$
This gives raise to the fact that the set of angles $\theta$ for which the ray $\arg(z)=\theta$ meets infinitely many discs of a given $\E$-set has linear measure zero \cite{Hayman,Laine}. The same conclusion obviously holds for any $R$-set.

The next elementary example shows that infinitely many rays can meet
infinitely many discs of a given $R$-set.

\begin{example}\label{example1}
Let $\theta_n=1/n$ for $n\in\N$. For each $n\in\N$ choose any sequence of discs $D(z_{n,k},r_{n,k})$ such that $\arg(z_{n,k})=\theta_n$, $|z_{n,k}|\to\infty$ as $k\to\infty$, $r_{n,k}>0$, and that
	$$
	\sum_{k=1}^\infty r_{n,k}\leq 2^{-n}.
	$$
Then each ray $\arg(z)=\theta_n$ meets infinitely many discs and the sum of the diameters of the discs in the entire countable collection $\{D(z_{n,k},r_{n,k})\}$ is $\leq 2$.
\end{example}

More generally, we will prove that certain plane curves that either 
drift away or asymptotically approach to a given critical ray 
$\arg(z)=\phi$ avoid a certain collection of discs. When $\phi=0$, these curves take a simple form
	$$
	y=\pm cK(x)\quad\textnormal{or}\quad y=\pm cL(x),\quad c>0,
	$$
where $K$ is increasing, continuous, concave, and essentially satisfies $1\leq K(x)\leq x$, while $L$ is decreasing, continuous, convex and $L(x)\to 0^+$ as $x\to\infty$. Domains surrounded by curves of either type around finitely many symmetrically separated critical rays are known to contain the majority of zeros of exponential polynomials \cite{HITW, HW, Stein}. This property is, in fact, the motivation for the present paper.

The collections of discs associated with the aforementioned curves depend on the given curve type. In addition, the projection $E$ of these discs onto the interval $[1,\infty)$ turns out to be either
	$$
	\int_E \frac{dx}{K(x)}<\infty\quad\textnormal{or}\quad
	\int_E \frac{dx}{L(x)}<\infty.
	$$
The former gives a continuous transition from sets of finite logarithmic measure to sets of finite linear measure, while the latter transits the sets of finite linear measure to much thinner sets.

The discussions related to the previous two paragraphs will be carried out in detail in Section~\ref{plane-curves}. Analogous situations are then considered in the case of the unit disc $\D$ in Section~\ref{disc-curves}.
In particular, we will demonstrate that certain collections of Euclidean discs can be avoided with different families of curves that tend to the
boundary $\partial\D$ tangentially or non-tangentially.
A continuous transition of standard exceptional sets will also be given.

The idea of giving a continuous transition for exceptional sets from finite logarithmic measure to smaller sets is not entirely new neither in the case of $\C$ nor in the case of $\D$. Here, we wish to acknowledge the seminal work by Hinkkanen \cite{Hinkkanen} and Ye \cite{Ye}, in describing the size of the error term in Nevanlinna's second fundamental theorem.

Gundersen used $R$-sets and $\E$-sets (but under different teminology) in finding sharp estimates for logarithmic derivatives of meromorphic functions \cite{Gundersen}.
Applying these findings, Chiang and Feng obtained pointwise estimates for logarithmic differences of meromorphic functions \cite{C-F}, while Wen and Ye estimated logarithmic $q$-differences of meromorphic functions \cite{WY}. Estimates in these directions have had numerous applications in the theories of complex differential equations and complex difference equations. As an application of the continuous transition from sets of finite logarithmic measure to smaller sets, we are able to obtain refined pointwise estimates for logarithmic derivatives, logarithmic differences and logarithmic $q$-differences of meromorphic functions.
The details will be carried out in detail in Section~\ref{ld-ld}.

The existing literature contains lemmas to avoid an exceptional set of
finite linear measure or finite logarithmic measure. In Section~\ref{ld-ld} we use the continuous transition method in develping a lemma that allows us to avoid all exceptional sets that are at most of finite logarithmic measure in an optimal way. This approach improves the existing standard approaches.


\section{Results for curves in the plane}\label{plane-curves}

Our construction is based on concavity and convexity
of curves. The functions involved also have to satisfy a regularity
condition, which goes by the name ''doubling'' in the literature.

\subsection{Concave curves}\label{K}

Let $K:[0,\infty)\to [0,\infty)$ be a strictly increasing, continuous, concave function satisfying $K(x_0)=0$ for some $x_0\geq 0$. Note that the maximal growth rate for such a function $K$ is $K(x)=O(x)$. Moreover, we suppose that $K$ satisfies the following doubling condition: There exist constants $\alpha>1$ and $R\geq 1$ such that 
	\begin{equation}\label{double}
	K(2x) \leq \alpha K(x),\quad x\geq R.
	\end{equation}

The doubling condition \eqref{double} is mainly used for regularity purposes, but it also restricts the growth of $K$ in the following way: If $x\geq R$, then there exists a positive integer $N$ such that $R2^{N-1}\leq x\leq R2^N$. Thus
	$$
	K(x)\leq \alpha K(x/2)\leq \alpha^2K(x/2^2)\leq\ldots
	\leq \alpha^{N}K(x/2^{N})\leq \alpha^{N} K(R).
	$$
Using $(N-1)\log 2\leq \log\frac{x}{R}\leq N\log 2$, this gives rise to
	$$
	K(x)\leq O\left(x^\frac{\log\alpha}{\log 2}\right),
	\quad \alpha>1.
	$$
Due to our maximal growth rate, we have to make a technical restriction that $\alpha\leq 2$. The extremal value $\alpha=2$ works for the identity mapping. The power functions $x\mapsto x^a$, $a\in (0,1)$, satisfy the doubling condition for any $\alpha\geq 2^a$. The function $x\mapsto\log x$
works here as well. The non-permitted extremal value $\alpha=1$ works for constant functions only. 	
	
A countable collection of Euclidean discs $D(z_n,r_n)$ for which $z_n\in \mathbb{C},|z_n|\rightarrow \infty, r_n>0$, and
\begin{equation}\label{logsum}
	\sum_n \frac{r_n}{K(|z_n|)}<\infty,
	\end{equation}
is called a $K$-set. The case $K(x)=x$ corresponds to the $\E$-set, while the non-permitted case that $K$ is a constant function corresponds to the $R$-set. Note that \eqref{logsum} is not a restriction for the quantity of the points $z_n$.

For any $\phi\in[0,2\pi]$ and any $c>0$, we denote
	$$
	\Lambda(K,\phi,c)=\left\{z\in\C : |\Im\left(ze^{-i\phi}\right)|\leq cK
\left(\Re(ze^{-i\phi})\right),\, \Re\left(ze^{-i\phi}\right)\geq x_0\right\}.
	$$
In the case $\phi=0$, this domain reduces to
	$$
	\Lambda(K,0,c)=\left\{z\in\C : |y|\leq cK(x),\, x\geq x_0\right\},
	$$
and the boundary of it consists of two easily accessible curves of the form
	\begin{equation}\label{easy-curves}
	y=\pm cK(x),\quad x\geq x_0,
	\end{equation}
around the positive real axis.

We are now ready to state and prove the first of our main results, which contains the $\E$-set
as a special case. Indeed, if $K(x)=x$, then the choices $\phi\in\{0,\pi/2,\pi,3\pi/2\}$
will cover all rays emanating from the origin.

\begin{theorem}\label{thm1}
Let $U$ be a $K$-set, and let $\phi\in[0,2\pi]$.
Then the set $C\subset (0,\infty)$ of values $c$ for which the curve $\partial \Lambda(K,\phi,c)$
meets infinitely many discs $D(z_n,r_n)$ has measure zero.
Moreover, the projection $E$ of $U$ onto the interval $[1,\infty)$ satisfies $\int_E\frac{dx}{K(x)}<\infty$.
\end{theorem}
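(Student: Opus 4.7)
The plan is to treat the two conclusions separately. The projection bound is the quicker of the two: from concavity and $K(x_0)=0$ one checks that $K(x)/(x-x_0)$ is non-increasing on $(x_0,\infty)$, whence $K(x)=O(x)$, and then $\sum_n r_n/K(|z_n|)<\infty$ forces $r_n/|z_n|\to 0$. For all large $n$ we therefore have $|z_n|-r_n\geq|z_n|/2\geq R$, so the doubling condition gives $K(|z_n|-r_n)\geq K(|z_n|)/\alpha$ and
$$\int_{|z_n|-r_n}^{|z_n|+r_n}\frac{dx}{K(x)}\leq\frac{2r_n}{K(|z_n|-r_n)}\leq\frac{2\alpha r_n}{K(|z_n|)}.$$
Summation over $n$ and \eqref{logsum} yield $\int_E dx/K(x)<\infty$ once the finitely many small-$n$ contributions are absorbed into an $O(1)$ term.

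For the statement about $C$, I would reduce to $\phi=0$ by rotating the plane, write $z_n=x_n+iy_n$, and, for each $M>0$, set
$$C_n(M)=\bigl\{c\in(0,M]:\partial\Lambda(K,0,c)\cap D(z_n,r_n)\neq\emptyset\bigr\}.$$
The strategy is to prove $|C_n(M)|\leq C_M r_n/K(|z_n|)$ for all $n$ sufficiently large (depending on $M$); granting this, \eqref{logsum} gives $\sum_n|C_n(M)|<\infty$, the first Borel--Cantelli lemma yields $|\limsup_n C_n(M)|=0$, and letting $M$ run through $\mathbb{N}$ shows $|C|=0$. By the reflection $y\mapsto -y$ it is enough to estimate the contribution from the upper curve $y=cK(x)$.

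To derive the per-disc bound, note first that, by continuity in $c$, the set of $c$ for which the upper curve meets $D(z_n,r_n)$ is an interval, contained (in the generic case $y_n\geq r_n$ and $x_n-r_n\geq x_0$) in
$$\bigl[(y_n-r_n)/K(x_n+r_n),\;(y_n+r_n)/K(x_n-r_n)\bigr],$$
with obvious modifications at the boundary of these cases. The restriction $c\leq M$ combined with $K(x)=O(x)$ and $r_n/|z_n|\to 0$ forces, for all large $n$, the relations $r_n\leq x_n/3$ and $|z_n|\leq C_M x_n$; the doubling condition then supplies $K(x_n\pm r_n)\asymp K(x_n)\asymp K(|z_n|)$, and the concavity estimate $K(x_n+r_n)-K(x_n-r_n)\leq 2r_n K'(x_n-r_n)\leq 2r_n K(x_n-r_n)/(x_n-r_n-x_0)$ shows that the length of the above interval is of order $r_n/K(|z_n|)$. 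The main obstacle is precisely the comparability $K(x_n)\asymp K(|z_n|)$: it genuinely fails when $|y_n|$ dominates $x_n$, which is why the Borel--Cantelli step must be carried out in each bounded window $(0,M]$ separately rather than globally on $(0,\infty)$.
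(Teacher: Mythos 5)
Your proposal is correct and follows essentially the same route as the paper's proof: both reduce to $\phi=0$, confine $c$ to bounded windows so that the points $z_n$ lie between lines $y=\pm Mx$ and hence $K(x_n)\asymp K(|z_n|)$ via doubling, trap the admissible $c$'s for each disc in the interval $\bigl[(y_n-r_n)/K(x_n+r_n),\,(y_n+r_n)/K(x_n-r_n)\bigr]$ of length $O(r_n/K(|z_n|))$, and sum the tail (your Borel--Cantelli step is the paper's $\veps$-tail estimate in disguise), while the projection computation is identical. The only substantive variation is how the $y_n$-dependent part of the interval length is controlled: the paper cancels it by comparing with the curve through the center $z_n$ (yielding $c_2-c_1\leq 4r_n/K(x_n-r_n)$), whereas you use $c\leq M$ together with the concavity estimate $K(x_n+r_n)-K(x_n-r_n)\leq 2r_nK(x_n-r_n)/(x_n-r_n-x_0)$ --- which you should justify by the three-chord inequality rather than via $K'$, since $K$ is only assumed continuous and concave.
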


\begin{proof}
By appealing to a rotation, we may suppose that $\phi=0$. Then the boundary $\partial\Lambda(K,\phi,c)$ consists of the curves in \eqref{easy-curves}. It suffices to consider the curves
$y=cK(x)$ only, where $c\in I$ and $I\subset (0,\infty)$ is a compact set. Indeed, we may cover the
interval $(0,\infty)$ by countably many compact sets, and, by countable additivity of the Lebesgue measure,
a countable union of sets of measure zero has measure zero.

Without loss of generality, we may suppose that the points $z_n=x_n+iy_n$ are pairwise distinct and
organized by increasing modulus. If the sequence $\{x_n\}$ is bounded from above by a constant $M>0$, then
the points $z_n$ all lie in the half-plane $\Re z\leq M$. In this case each curve $y=cK(x)$ meets at
most finitely many discs $D_n=D(z_n,r_n)$ for all $c>0$. Thus we
may suppose that $\{x_n\}$ has a subsequence, denoted again by $\{x_n\}$, for which $x_n\to \infty$.

Let $\veps>0$. By \eqref{logsum}, we may choose $N(\veps)\in\N$ large enough such that
    \begin{equation}\label{epsilon-assumption}
    \sum_{n=N(\veps)}^\infty \frac{r_n}{K(|z_n|)}<\veps.
    \end{equation}
Since $x_n\to\infty$, we may further suppose that $x_n\geq R$ for all $n\geq N(\veps)$.
On the other hand, by once again appealing to a subsequence,
if necessary, we may suppose that the points $z_n$ are located in between the straight lines $y=\pm M x$, where $M>0$ depends only on the set $I$. For otherwise no curve of the form $y=c K(x)$
will certainly meet infinitely many discs $D_n$. Moreover, due to our
global growth restriction $K(x)=O(x)$, we may suppose that $K(x)\leq Mx$ for all $x\geq R$.

Using the monotonicity of $K$, we obtain
	$$
	K(|z_n|)=K\left(\sqrt{x_n^2+y_n^2}\right)\leq K(|x_n|+|y_n|)
	\leq K((1+M)x_n).
	$$
If $M\leq 1$, then	the doubling condition yields
	$$
	K(|z_n|)\leq K(2x_n)\leq \alpha K(x_n),
	$$
while if $M>1$, then there exists a positive integer $N$ such that
$2^{N-1}\leq 1+M\leq 2^N$, and so
	$$
	K(|z_n|)\leq \alpha K((1+M)x_n/2)\leq \ldots\leq \alpha^N K(x_n).
	$$
Thus, for all $M>0$, we find that
	\begin{equation}\label{zx}
	\frac{r_n}{K(|z_n|)}\geq \frac{r_n}{\alpha^N K(x_n)}\geq\frac{r_n}{\alpha^N Mx_n}.
	\end{equation}
Keeping \eqref{logsum} in mind, we see that the sequence $\big\{\frac{r_n}{x_n}\big\}$ tends to $0$.
Hence, by choosing a bigger $N(\veps)$ in \eqref{epsilon-assumption}, if necessary, we may suppose that
	\begin{equation}\label{xn-assumption}
	x_n-r_n\geq x_n/2\geq e,\quad n\geq N(\veps).
	\end{equation}

\begin{figure}[H]\label{concave-plane}
    \begin{center}
    \begin{tikzpicture}
    \draw[->](0,0)--(10,0)node[left,below]{$x$};
    \draw[->](1,-1)--(1,5)node[right]{$y$};
    \draw[domain=1:9] plot(\x,{ln(\x)})node[right,font=\tiny]{$y=c_1K(x)$};
     \draw[domain=1:9] plot(\x,{2*ln(\x)})node[right,font=\tiny]{$y=c_2K(x)$};
    \draw[thick,blue](7,2.9) circle [radius=26.5pt];
    \draw[thin](7,2.9) circle [radius=0.5pt]node[right]{$z_n$};
    \draw[thick](6.7,3.8) circle [radius=0.5pt]node[above=3pt,font=\tiny]{$(a_2,c_2K(a_2))$};
    \draw[thick](7.2,2) circle [radius=0.6pt]node[below=3pt,font=\tiny]{$(a_1,c_1K(a_1))$};
    \draw[-,dashed](1,0)--(7,2.9);
     \draw[-,dashed](7,2.9) to node[right,font=\tiny]{$r_n$} (6.7,3.8);
      \draw[-,dashed](7,2.9) to node[right,font=\tiny]{$r_n$} (7.2,2);
     \draw[-,dashed](1,0)--(6.7,3.8);
     \draw[-,dashed](1,0)--(7.2,2);
    \end{tikzpicture}
    \end{center}
	\begin{quote}
    \caption{Concave case in the complex plane.}
    \end{quote}
    \end{figure}
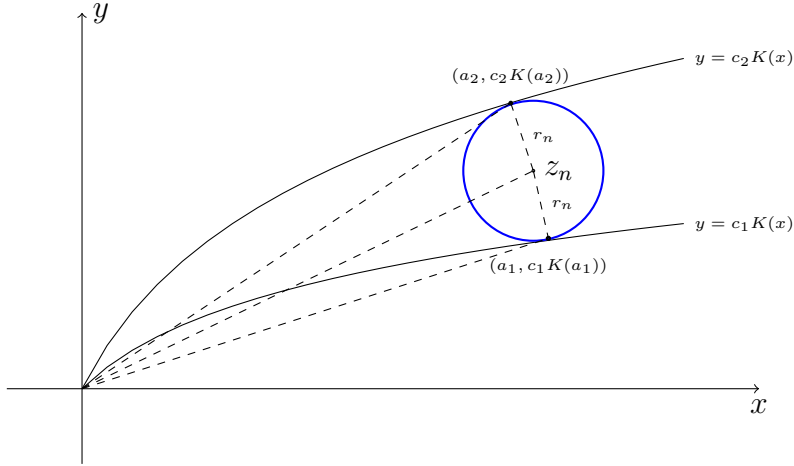

Suppose that a disc $D_n$, $n\geq N(\veps)$, lies asymptotically between the increasing curves
$y=c_1 K(x)$ and $y=c_2 K(x)$, where $c_2>c_1>0$.
Let $(a_i,c_i K(a_i))$, $i=1,2$, denote the intersection points. Since $K(x)$ is increasing, continuous and concave,
the intersection points satisfy
	\begin{equation}\label{intersection-points}
	x_n<a_1<x_n+r_n\quad\textnormal{and}\quad x_n-r_n<a_2<x_n.
	\end{equation}
This can be visualized geometrically by sketching an asymptotic square
	$$
	S_n=\{z\in\C : x_n-r_n\leq \Re (z)
	\leq x_n+r_n,\ y_n-r_n\leq \Im (z)\leq y_n+r_n\}
	$$
around the disc $D_n$. Then,
	\begin{eqnarray*}
	c_2K (x_n-r_n)&\leq& c_2K (a_2)\leq y_n+r_n,\\
	c_1 K (x_n+r_n)&\geq& c_1 K (a_1)\geq y_n-r_n,
	\end{eqnarray*}
which gives raise to
	$$
	c_2-c_1\leq \frac{y_n+r_n}{K (x_n-r_n)}-\frac{y_n-r_n}{K (x_n+r_n)}.
	$$
The curve $y=cK (x)$ which travels throught the center point $z_n$ of $D_n$ satisfies $c=y_n/K (x_n)$,
where $c_1<c<c_2$. Since $K(x)$ is increasing, continuous and concave, we see that
	$$
	y_n-r_n\leq cK (x_n-r_n)\leq cK( x_n+r_n)\leq y_n+r_n.
	$$
Using \eqref{xn-assumption}, this gives us
	\begin{equation}\label{cc}
	\begin{split}
	c_2-c_1&\leq \frac{y_n-r_n+2r_n}{K (x_n-r_n)}-\frac{y_n+r_n-2r_n}{K (x_n+r_n)}\\
	&\leq \frac{4r_n}{K (x_n-r_n)}\leq \frac{4r_n}{K (x_n/2)}\leq \frac{4\alpha r_n}{K (x_n)}.
	\end{split}
	\end{equation}

Let $\varepsilon>0$ and $N(\veps)\in\N$ be as in \eqref{epsilon-assumption}, and let $J_\varepsilon\subset I$ denote the set of values $c$ for which $y=cK(x)$ meets at least one disc $D_n$ with $n\geq N(\veps)$.
Then \eqref{epsilon-assumption}, \eqref{zx} and \eqref{cc} show that
	$$
	\int_{J_\varepsilon}dx\leq \sum_{n=N(\varepsilon)}\frac{4\alpha r_n}{K (x_n)}
	\leq \sum_{n=N(\varepsilon)}\frac{4\alpha^{N+1} r_n}{K (|z_n|)}<4\alpha^{N+1}\varepsilon.
	$$
Here $N$ depends only on $M$, which in turn depends only on the compact set~$I$. Since
$C\cap I$ is clearly contained in $J_\varepsilon$ for every $\varepsilon>0$, it follows that
$C\cap I$ has measure zero.

The remaining assertion follows from
	\begin{eqnarray*}
	\int_E\frac{dx}{K(x)} &\leq& \sum_n\int_{|z_n|-r_n}^{|z_n|+r_n}\frac{dx}{K (x)}+O(1)\leq
	\sum_n \frac{2r_n}{K (|z_n|-r_n)}+O(1)\\
	&\leq& \sum_n \frac{2r_n}{K (|z_n|/2)}+O(1)\leq \sum_n \frac{2\alpha r_n}{K(|z_n|)}+O(1)<\infty,
	\end{eqnarray*}
where we have used \eqref{xn-assumption}.
\end{proof}

\begin{remark}
The compact set $I\subset (0,\infty)$ in the previous proof is required essentially in the extremal case $K(x)=x$ only. If $K(x)/x\to 0$ as $x\to\infty$, then we may suppose that all points $z_n$ are in between the curves $y=\pm x$, in which case we may choose $M=1=N$.
\end{remark}

\subsection{Convex curves}\label{L}

Let $L:(0,\infty)\to (0,\infty)$ be a strictly decreasing, continuous, and convex function such that $L(x)\to 0^+$ as $x\to \infty$. Moreover, we suppose that $L$ satisfies the following doubling condition: There exist constants $\beta<1$ and $R\geq 1$ such that $L(2x) \geq \beta L(x)$ for all $x\geq R$. For example, the functions $x\mapsto x^{-p}$ satisfy all of the
above conditions for all $p>0$.

The doubling condition in question induces a restriction on how fast $L(x)$ can tend to zero. Indeed, by reversing the reasoning for the function $K$ in the beginning of Section~\ref{K}, we easily obtain
	$$
	L(x)\geq O\left(x^\frac{\log\beta}{\log 2}\right),
	\quad \beta<1.
	$$
So, the functions $x\mapsto x^{-p}$, $p>0$, are extremal in this sense.
	
A countable collection of Euclidean discs $D(z_n,r_n)$ for which $z_n\in \mathbb{C},|z_n|\rightarrow \infty, r_n>0$, and
\begin{equation}\label{logsum2}
	\sum_n \frac{r_n}{L(|z_n|)}<\infty,
	\end{equation}
is called an $L$-set. The non-permitted case that $L$ is a constant function corresponds to the $R$-set. In the extremal case $L(x)=x^{-p}$,
$p>0$, the radii $r_n$ must tend to zero very fast, which in turn leads to a very thin projection set on the interval $[1,\infty)$.

For any $\phi\in[0,2\pi]$ and any $c>0$, we denote
	$$
	\Lambda(L,\phi,c)=\left\{z\in\C : |\Im\left(ze^{-i\phi}\right)|\leq cL
\left(\Re(ze^{-i\phi})\right),\, \Re\left(ze^{-i\phi}\right)\geq R\right\}.
	$$
In the case $\phi=0$, this domain reduces to
	$$
	\Lambda(L,0,c)=\left\{z\in\C : |y|\leq cL(x),\, x\geq R\right\},
	$$
and the boundary of it consists of two easily accessible curves of the form
	\begin{equation}\label{easy-curves2}
	y=\pm cL(x),\quad x\geq R,
	\end{equation}
around the positive real axis.

\begin{theorem}\label{thm2}
Let $U$ be an $L$-set, and let $\phi\in[0,2\pi]$. Then the set $C\subset (0,\infty)$ of values $c$
for which the boundary $\partial\Lambda(L,\phi,c)$ meets infinitely many discs $D(z_n,r_n)$ has
measure zero. Moreover, the projection $E$ of $U$ onto the interval $[1,\infty)$ satisfies $\int_E\frac{dx}{L(x)}<\infty$.
\end{theorem}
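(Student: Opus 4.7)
The plan is to adapt the proof of Theorem~\ref{thm1} to the convex decreasing setting, with appropriate sign reversals. After a rotation I take $\phi=0$, and by countable additivity I restrict $c$ to a compact set $I\subset(0,\infty)$. If the projections $x_n=\Re z_n$ are bounded, then $|z_n|\to\infty$ forces $y_n\to\infty$ while $L(x_n)$ stays bounded, so curves $y=cL(x)$ with $c\in I$ hit only finitely many discs; hence I may assume $x_n\to\infty$. Any disc met by some $c\in I$ satisfies $y_n\leq CL(x_n)$ with $C=\max I$, so in the tail $y_n\to 0$, $|z_n|$ is comparable to $x_n$, and $r_n\leq x_n/2$. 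Fix $\veps>0$ and choose $N(\veps)$ with $\sum_{n\geq N(\veps)} r_n/L(|z_n|)<\veps$.

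The geometric core, analogous to \eqref{cc}, is as follows. If curves $y=c_iL(x)$, $i=1,2$, both meet $D_n$ at intersection points $(a_i,c_iL(a_i))$ with $a_i\in[x_n-r_n,x_n+r_n]$, then since $L$ is decreasing,
\[
c_2 L(x_n+r_n)\leq c_2 L(a_2)\leq y_n+r_n,\qquad c_1 L(x_n-r_n)\geq c_1 L(a_1)\geq y_n-r_n,
\]
which yields
\[
c_2-c_1\leq \frac{y_n+r_n}{L(x_n+r_n)}-\frac{y_n-r_n}{L(x_n-r_n)}.
\]
This is the mirror image of the bound in Theorem~\ref{thm1} with the roles of $x_n+r_n$ and $x_n-r_n$ interchanged.

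The main technical step, which I expect to be the principal obstacle, is to verify the centre-curve inequality $y_n-r_n\leq cL(x_n+r_n)\leq cL(x_n-r_n)\leq y_n+r_n$ for $c=y_n/L(x_n)\in[c_1,c_2]$; this is what activates the same telescoping cancellation used in \eqref{cc}. Rewritten, the verification demands $y_n |L(x_n)-L(x_n\pm r_n)|/L(x_n)\leq r_n$. Combining convexity, which yields $L(x)-L(2x)\geq |L'_+(2x)|\cdot x$, with doubling, which yields $L(x)-L(2x)\leq (1-\beta)L(x)$, gives $|L'_+(u)|=O(L(u)/u)$ and hence $|L(x_n)-L(x_n\pm r_n)|=O(r_n L(x_n)/x_n)$. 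Combined with $y_n\leq CL(x_n)$, this forces the inequality for all sufficiently large $n$, because $L(x_n)/x_n\to 0$.

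Once this is in hand, the same telescoping as in \eqref{cc} produces $c_2-c_1\leq 4r_n/L(x_n+r_n)$, and doubling together with $|z_n|\leq 2x_n$ upgrades this to $c_2-c_1\leq 4\beta^{-2}r_n/L(|z_n|)$. Summing over $n\geq N(\veps)$ and sending $\veps\to 0$ shows $C\cap I$ has measure zero, and a countable compact exhaustion of $(0,\infty)$ completes the first claim. The projection estimate follows from
\[
\int_{|z_n|-r_n}^{|z_n|+r_n}\frac{dx}{L(x)}\leq \frac{2r_n}{L(|z_n|+r_n)}\leq \frac{2}{\beta}\cdot\frac{r_n}{L(|z_n|)},
\]
by doubling, summed against \eqref{logsum2}.
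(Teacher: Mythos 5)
Your proposal is correct and takes essentially the same route as the paper: mirror the concave argument, trap the disc in an asymptotic square, bound $c_2-c_1\leq 4r_n/L(x_n+r_n)$ via the curve through the centre, upgrade with the doubling condition to $O(r_n/L(|z_n|))$, and sum the tail; the projection estimate is handled identically. The only differences are cosmetic: you explicitly verify the centre-curve inequality $y_n-r_n\leq cL(x_n+r_n)\leq cL(x_n-r_n)\leq y_n+r_n$ via the convexity/doubling bound $|L'_+(u)|=O(L(u)/u)$, a step the paper's \eqref{cc2} uses implicitly, and you retain the compact exhaustion in $c$, which is harmless although the paper observes it is unnecessary here because $L(x)\to0$ confines the relevant $z_n$ between $y=\pm x$ uniformly in $c$.
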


\begin{proof}
We follow the same method used in proving Theorem~\ref{thm1}. We may suppose that $\phi=0$, in which case the boundary curves are of the form \eqref{easy-curves2}. If $z_n=x_n+iy_n$, we
may suppose that $\{x_n\}$ has a subsequence, denoted again by $\{x_n\}$, for which $x_n\to \infty$. Since $r_n\to 0$, the inequalities in \eqref{xn-assumption} require no further convincing in this case. Since $L(x)\to 0$ as $x\to\infty$, we may suppose that the points $z_n$ lie in between the lines $y=\pm x$. Therefore, we may directly consider different values of $c\in (0,\infty)$ without having to restrict to one compact subset of $(0,\infty)$ at the time.
Let $\veps>0$, and choose $N(\veps)\in\N$ large enough such that
    \begin{equation}\label{epsilon-assumption2}
    \sum_{n=N(\veps)}^\infty \frac{r_n}{L(|z_n|)}<\veps.
    \end{equation}
This is an analogue of \eqref{epsilon-assumption}.

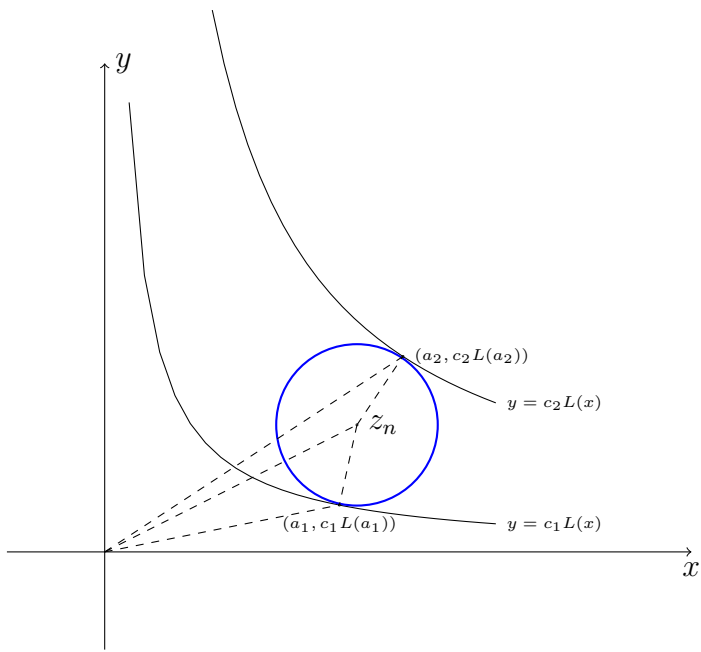
\begin{figure}[H]\label{convex-plane}
    \begin{center}
    \begin{tikzpicture}[scale=1.3]
    \draw[->](-1,0)--(6,0)node[left,below]{$x$};
    \draw[->](0,-1)--(0,5)node[right]{$y$};
    \draw[domain=1.1:4] plot(\x,{6.1/\x})node[right,font=\tiny]{$y=c_2L(x)$};
    \draw[domain=0.25:4] plot(\x,{1.15/\x})node[right,font=\tiny]{$y=c_1L(x)$};
    \draw[thick,blue](2.58,1.3) circle [radius=23.5pt];
    \draw[thick](2.58,1.3) circle [radius=0.1pt]node[right=0.001pt]{$z_n$};
    \draw[thick](2.4,0.48) circle [radius=0.1pt]node[below=0.01pt,font=\tiny]{$(a_1,c_1L(a_1))$};
    \draw[thick](3.05,2) circle [radius=0.1pt]node[above=0.1pt,font=\tiny,right]{$(a_2,c_2L(a_2))$};
    \draw[-,dashed](0,0)--(2.58,1.3);
    \draw[-,dashed](2.58,1.3)to node[above=0.02pt,font=\tiny]{} (2.4,0.48);
    \draw[-,dashed](2.58,1.3) to node[above,font=\tiny]{} (3.05,2);
   \draw[-,dashed](0,0)--(2.4,0.48);
    \draw[-,dashed](0,0)--(3.05,2);
    \end{tikzpicture}
    \end{center}
	\begin{quote}
    \caption{Convex case in the complex plane.}
    \end{quote}
    \end{figure}

Suppose that a disc $D_n$ lies asymptotically between the decreasing curves
$y=c_1 L(x)$ and $y=c_2 L(x)$, where $c_2>c_1>0$.
Let $(a_i,c_i L(a_i))$, $i=1,2$, denote the intersection points. Since $L(x)$ is decreasing, continuous and convex,
the intersection points satisfy
	\begin{equation*}
	x_n-r_n<a_1<x_n\quad\textnormal{and}\quad x_n<a_2<x_n+r_n.
	\end{equation*}
This can be visualized geometrically by sketching an asymptotic square
	$$
	S_n=\{z\in\C : x_n-r_n\leq \Re (z)\leq x_n+r_n,\ y_n-r_n\leq \Im (z)\leq y_n+r_n\}
	$$
around the disc $D_n$. Then,
	\begin{eqnarray*}
	c_2L (x_n+r_n)&\leq& c_2L (a_2)\leq y_n+r_n,\\
	c_1 L (x_n-r_n)&\geq& c_1 L (a_1)\geq y_n-r_n.
	\end{eqnarray*}
The analogue of \eqref{cc} is now
	\begin{equation}\label{cc2}
	\begin{split}
	c_2-c_1&\leq\frac{y_n-r_n+2r_n}{L(x_n+r_n)}-\frac{y_n+r_n-2r_n}{L(x_n-r_n)}\\
	&\leq \frac{4r_n}{L(x_n+r_n)}\leq \frac{4r_n}{L(2x_n)}\leq \frac{4r_n}{\beta L(x_n)}
	\leq \frac{4r_n}{\beta L(|z_n|)}.
	\end{split}
	\end{equation}

Let $C_\varepsilon\subset (0,\infty)$ denote the set of values $c$ for which the curve
$y=cL(x)$ meets at least one of the discs $D_n$, where $n\geq N(\varepsilon)$.
By \eqref{epsilon-assumption2} and \eqref{cc2},
	$$
	\int_{C_\varepsilon}dx\leq \sum_{n=N(\varepsilon)}^\infty \frac{4r_n}{\beta L(|z_n|)}
	<\frac{4}{\beta}\varepsilon.
	$$
Since $C$ is a subset of $C_\varepsilon$ for any $\varepsilon>0$, we conclude that $\int_Cdx=0$.

The remaining assertion follows from
	\begin{eqnarray*}
	\int_E\frac{dx}{L(x)} &\leq& \sum_n\int_{|z_n|-r_n}^{|z_n|+r_n}\frac{dx}{L(x)}+O(1)\leq
	\sum_n \frac{2r_n}{L(|z_n|+r_n)}+O(1)\\
	&\leq& \sum_n \frac{2r_n}{L(2|z_n|)}+O(1)\leq \sum_n \frac{2r_n}{\beta L(|z_n|)}+O(1)<\infty,
	\end{eqnarray*}
and the proof is complete.
\end{proof}

\subsection{The constant case}\label{C}

In order to avoid unnecessary complications in the geometric reasoning,
we have intentionally assumed strict monotonicity on $K(x)$ and on $L(x)$.
However, either proof goes through if these functions  are constant functions.
The curves in this situation are just half-lines, and the associated collection of
discs constitutes
an $R$-set. This gives rise to the following corollary.

\begin{corollary}\label{cor1}
Let $k\in\R$. Then the set $C\subset\R$ of values $c$ for which the line $y=kx+c$
or the line $x=c$ meets infinitely many
discs of a given $R$-set has measure zero.
\end{corollary}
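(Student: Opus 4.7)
The plan is to reduce Corollary~\ref{cor1} directly to the constant case of Theorem~\ref{thm1} (as permitted by Subsection~\ref{C}), exploiting the fact that the parameter $\phi$ in the definition of $\Lambda(K,\phi,c)$ already encodes a rotation. Taking $K\equiv 1$, condition \eqref{logsum} degenerates to $\sum\rho_n<\infty$, which is precisely the $R$-set condition, and $\Lambda(K,\phi,c)$ becomes the half-strip $\{z:|\Im(ze^{-i\phi})|\leq c,\ \Re(ze^{-i\phi})\geq x_0\}$, whose two long boundary components are parallel half-lines of slope $\tan\phi$.

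First I would align the given family of lines with these boundary half-lines. For the non-vertical case, set $\phi=\arctan k\in(-\pi/2,\pi/2)$; a short computation shows that the locus $\Im(ze^{-i\phi})=c$ is the line $y=kx+c\sec\phi$. Hence, as $c$ ranges over $(0,\infty)$, the boundary components of $\Lambda(K,\phi,c)$ trace out (the half-line portions of) every line $y=kx+c'$ with $c'\neq 0$, via the linear, measure-preserving reparametrization $c\mapsto c'=\pm c\sec\phi$. For the vertical case $x=c$, take $\phi=\pi/2$ and argue identically. Theorem~\ref{thm1} (in the constant case) then guarantees that the set of positive $c$ for which $\partial\Lambda(K,\phi,c)$ meets infinitely many discs is null, and the linear change of variables transports this to a null exceptional set for the parameter $c'$.

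The remaining step handles the passage from half-lines to full lines. A full line of the form $y=kx+c$ splits, with respect to the foliation $\Re(ze^{-i\phi})=\text{const}$, into a ``right'' half-line (covered by Theorem~\ref{thm1} applied with $\phi$), a ``left'' half-line (covered by applying Theorem~\ref{thm1} once more with $\phi+\pi$), and a bounded segment where $|\Re(ze^{-i\phi})|<x_0$. Because $|\zeta_n|\to\infty$ for any $R$-set, only finitely many discs of the collection can intersect that bounded segment, so a full line meets infinitely many discs iff at least one of its two half-lines does. Combining the two applications of Theorem~\ref{thm1} and the single exceptional value $c'=0$ places $C$ inside a finite union of null sets, hence $C$ itself has measure zero.

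The main obstacle is mostly bookkeeping rather than analytic: one must verify the parametric correspondence $c\leftrightarrow c'=\pm c\sec\phi$ carefully so that both boundary components of $\Lambda(K,\phi,c)$ are accounted for (this already covers both signs of $c'$), and one must invoke Theorem~\ref{thm1} with both $\phi$ and $\phi+\pi$ to capture discs that escape to infinity along either direction of the chosen axis. No new analytic ideas beyond those in Theorem~\ref{thm1} are required.
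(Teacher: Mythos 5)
Your proposal is correct and follows essentially the same route as the paper: the paper simply observes that the proofs of Theorems~\ref{thm1} and~\ref{thm2} go through verbatim when $K$ (or $L$) is constant, the boundary curves then being half-lines and the disc collection an $R$-set. Your extra bookkeeping --- the measure-preserving reparametrization $c\mapsto c\sec\phi$, covering negative intercepts via the second boundary component, and assembling a full line from the two half-lines for $\phi$ and $\phi+\pi$ plus a bounded segment that meets only finitely many discs since $|\zeta_n|\to\infty$ --- is sound and merely makes explicit what the paper leaves implicit.
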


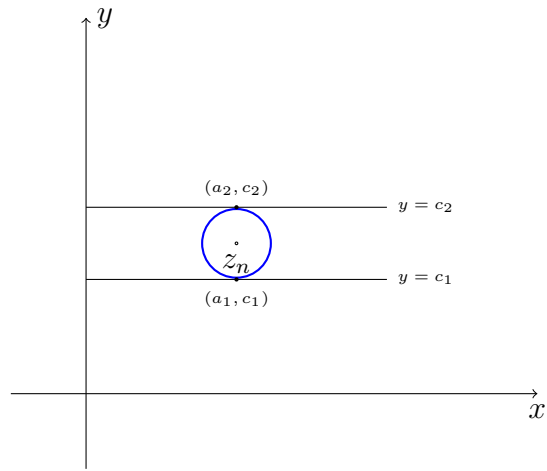
\begin{figure}[H]\label{measure}
    \begin{center}
    \begin{tikzpicture}[scale=1]
    \draw[->](-1,0)--(6,0)node[left,below]{$x$};
    \draw[->](0,-1)--(0,5)node[right]{$y$};
    \draw[domain=0:4] plot(\x,2.48)node[right,font=\tiny]{$y=c_2$};
    \draw[domain=0:4] plot(\x,1.52)node[right,font=\tiny]{$y=c_1$};
    \draw[thick,blue](2,2) circle [radius=13pt];
    \draw[thin](2,2) circle [radius=0.5pt]node[below]{$z_n$};
    \draw[thick](2,2.48) circle [radius=0.4pt]node[above,font=\tiny]{$(a_2,c_2)$};
    \draw[thick](2,1.52) circle [radius=0.4pt]node[below,font=\tiny]{$(a_1,c_1)$};
    \end{tikzpicture}
    \end{center}
	\begin{quote}
    \caption{Constant case in the complex plane.}
    \end{quote}
    \end{figure}

The following two examples illustrate the sharpness of Corollary~\ref{cor1} in regard to
the size of the set $C$.

\begin{example}
We construct an $R$-set $U$ by modifying the construction for the classical Cantor set $\Ce$.
For brevity, denote $I(x,r)=(x-r,x+r)$. Choose any two open discs in $\C$ whose vertical
projections on $\R$ are $I(1/6,2/6)$ and $I(5/6,2/6)$, and include them in the collection $U$. Next,
choose any four open discs in $\C$ whose vertical projections on $\R$ are
	$$
	I(1/18,2/18),\ I(5/18,2/18),\ I(13/18,2/18),\ I(17/18,2/18),
	$$
and include them in $U$. We continue in this way so that on the $k$th step we have $2^k$ open discs
whose vertical projections on $\R$ are open intervals of length $2/3^k$ and with the same center points as the closed intervals that are used in constructing the Cantor set $\Ce$. The collection $U$ contains all such discs
as $k$ tends to infinity, and hence their vertical projections form an open cover for $\Ce$.  The sum
of the diameters of the discs in $U$ is
	$$
	\sum_{k=1}^\infty 2^k\cdot\frac{2}{3^k}=\frac{4}{3}\sum_{k=0}^\infty \left(\frac23\right)^k=4,
	$$
and hence $U$ is an $R$-set. Now, if $c\in\Ce$, then the vertical line $x=c$ meets infinitely
many open discs in  $U$. Finally, it is well known that $\Ce$ is uncountable and has measure zero.

Note that this construction is independent on the density of the
center points $z_n$ of the discs in $U$ because the imaginary
parts of $z_n$'s can be chosen arbitrarily. Consequently, the exponent of convergence of the
sequence $\{z_n\}$ can be arbitrary or even infinite.
\end{example}

\begin{example}\label{example2}
Let $c_n=1/n$ for $n\in\N$. For each $n\in\N$ choose any sequence of discs $D(z_{n,k},r_{n,k})$ such that $\Im(z_{n,k})=c_n$, $|z_{n,k}|\to\infty$ as $k\to\infty$, $r_{n,k}>0$, and that
	$$
	\sum_{k=1}^\infty r_{n,k}\leq 2^{-n}.
	$$
Then each vertical line $y=c_n$ meets infinitely many discs, and the sum of the diameters of the discs in the entire collection $\{D(z_{n,k},r_{n,k})\}$ is $\leq 2$.
\end{example}

\subsection{Rapidly increasing convex curves}

Let $\L:(0,\infty)\to (0,\infty)$ be a strictly increasing, continuous
and convex function such that
	\begin{equation}\label{fast}
	\lim_{x\to\infty}\frac{x}{\L(x)}=0,
	\end{equation}
and let $R>0$ be such that $\L(x)\geq 1$ for all $x\geq R$.
In addition, we assume that $\L$ satisfies the following
doubling-type property: For every function $\delta:(0,\infty)\to (0,\infty)$ with
$\delta(x)\to 0$ as $x\to \infty$ there exists a constant $\alpha=\alpha(\delta)>0$ such that
	\begin{equation}\label{dumpling-cond0}
	\lim_{x\to \infty}\frac{\L(x)}{\L(x(1-\delta(x)))}=
	\lim_{x\to \infty}\frac{\L(x)}{\L(x(1+\delta(x)))}=\alpha.
	\end{equation}	
For example, the functions $x\mapsto x^p$, $p>1$, satisfy all of the above for $\alpha=1$.
From the projection sets point of view, a restriction	
	\begin{equation}\label{integral-infinity}
	\int_1^\infty\frac{dx}{\L(x)}=\infty
	\end{equation}
is necessary because it then makes sense to consider projection sets $E\subset[1,\infty)$ for which $\int_E\frac{dx}{\L(x)}<\infty$. For example, the function $x\mapsto x\log(1+x)$ satisfies all of the above, including
\eqref{integral-infinity}.

Even if \eqref{integral-infinity} is not
satisfied, it still makes sense to require
	\begin{equation}\label{logsum-L}
	\sum_{n=1}^\infty \frac{r_n}{\L(|z_n|)}<\infty
	\end{equation}		
regarding a collection of Euclidean discs $D_n=D(z_n,r_n)$. For example,
if $r_n=n$, $z_n=n\log (1+n)$ and $\L(x)=x^2$, then \eqref{logsum-L} is valid, but the angle in which each disc $D_n$ subtends at the origin
is asymptotic to $2/\log (1+n)\to 0$ as $n\to\infty$. Consequently,
the discs $D_n$ cover only a small portion of the plane $\C$
even though their projection set covers the whole interval $[1,\infty)$.
This is due to the fact that $z_{n+1}-r_{n+1}<z_n+r_n$ for all $n\in\N$.

Analogously as above, a countable collection of Euclidean discs $D(z_n,r_n)$
satisfying \eqref{logsum-L} with $z_n\in \mathbb{C}, |z_n|\rightarrow \infty$ and $r_n>0$
is called an $\L$-set.

For any $\phi\in[0,2\pi]$ and any $c>0$, we denote
	$$
	\Lambda(\L,\phi,c)=\left\{z\in\C : |\Im\left(ze^{-i\phi}\right)|\leq c\L
\left(\Re(ze^{-i\phi})\right),\, \Re\left(ze^{-i\phi}\right)\geq R\right\}.
	$$
In the case $\phi=0$, this domain reduces to
	$$
	\Lambda(\L,0,c)=\left\{z\in\C : |y|\leq c\L(x),\, x\geq R\right\},
	$$
and the boundary of it consists of two easily accessible curves of the form
	\begin{equation}\label{easy-curves2/5}
	y=\pm c\L(x),\quad x\geq R,
	\end{equation}
around the positive real axis.

\begin{theorem}\label{thm2/5}
Let $U$ be an $\L$-set, and let $\phi\in[0,2\pi]$. Suppose that
	\begin{equation}\label{technical0}
	\lim_{n\to\infty}\frac{r_n}{\Re\left(z_ne^{-i\phi}\right)}=0.
	\end{equation}
Then the set $C\subset (0,\infty)$ of values $c$
for which the boundary $\partial\Lambda(\L,\phi,c)$ meets infinitely many discs $D(z_n,r_n)$ has
no interior points. Moreover, the projection $E$ of $U$ onto the interval $[1,\infty)$ satisfies
$\int_E\frac{dx}{\L(x)}<\infty$.
\end{theorem}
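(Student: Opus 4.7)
The strategy mirrors Theorem~\ref{thm2}, but the fast growth of $\L$ leads to the weaker conclusion that $C$ has empty interior rather than measure zero, and the last step requires an additional density argument. By a rotation I may assume $\phi=0$, so the boundary curves of $\Lambda(\L,0,c)$ take the form $y=\pm c\L(x)$ for $x\geq R$, and the hypothesis \eqref{technical0} becomes $r_n/x_n\to 0$ where $z_n=x_n+iy_n$. By symmetry it suffices to treat the upper branch $y=c\L(x)$, and by passing to a subsequence I may assume $x_n\to\infty$. A disc $D_n$ can only be relevant to $c$-values in a given compact interval $I\subset(0,\infty)$ if $c_n:=y_n/\L(x_n)$ is bounded, so I restrict attention to such discs.

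Suppose a disc $D_n$ is met by two curves $y=c_i\L(x)$, $i=1,2$, with $c_2>c_1>0$, at intersection points $(a_i,c_i\L(a_i))\in\partial D_n$. Since $\L$ is increasing and convex, the intersections satisfy $x_n-r_n\leq a_2\leq x_n\leq a_1\leq x_n+r_n$ (the upper curve enters $D_n$ from the left, the lower from the right). The monotonicity of $\L$ yields $c_2\L(x_n-r_n)\leq y_n+r_n$ and $c_1\L(x_n+r_n)\geq y_n-r_n$, hence
\[
c_2-c_1\leq\frac{y_n+r_n}{\L(x_n-r_n)}-\frac{y_n-r_n}{\L(x_n+r_n)}=y_n\left(\frac{1}{\L(x_n-r_n)}-\frac{1}{\L(x_n+r_n)}\right)+r_n\left(\frac{1}{\L(x_n-r_n)}+\frac{1}{\L(x_n+r_n)}\right).
\]
Unlike in Theorem~\ref{thm2}, the $y_n$-term does not simply cancel. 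The doubling property \eqref{dumpling-cond0} applied with $\delta(x_n)=r_n/x_n\to 0$ forces $\L(x_n\pm r_n)/\L(x_n)\to 1$ (the monotonicity of $\L$ forces $\alpha=1$), so the difference $1/\L(x_n-r_n)-1/\L(x_n+r_n)\to 0$. Combined with $y_n=O(\L(x_n))$ on the relevant discs and $r_n/\L(x_n)\to 0$ (from \eqref{logsum-L} and $|z_n|\geq x_n$), both summands vanish as $n\to\infty$. Hence $|J_n|\to 0$, where $J_n:=\{c>0:\,y=c\L(x)\text{ meets }D_n\}$.

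Suppose for contradiction that $(a,b)\subset C$. Then the tail union $\bigcup_{n\geq N}J_n$ covers $(a,b)$ for every $N$, so the intervals $J_n\cap(a,b)$ of vanishing length must cover $(a,b)$ infinitely often. Since each $J_n$ is centered approximately at $c_n=y_n/\L(x_n)$, this forces the $c_n$'s to approximate every $c\in(a,b)$ at the scale $|J_n|/2$. A careful density / Baire-category argument, combined with the $\L$-set hypothesis \eqref{logsum-L}, rules out this configuration and yields the contradiction. This is the main obstacle of the proof: because $\L$ grows so fast that $|J_n|\sim r_n\L'(x_n)/\L(x_n)$ may dominate $r_n/\L(|z_n|)$, the sum $\sum_n|J_n|$ can diverge even while $\sum_n r_n/\L(|z_n|)<\infty$, so a naive Borel--Cantelli argument is unavailable.

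For the projection-set estimate, the doubling property applied with $\delta(|z_n|)=r_n/|z_n|\leq r_n/x_n\to 0$ gives $\L(|z_n|-r_n)/\L(|z_n|)\to 1$, so the standard chain
\[
\int_E\frac{dx}{\L(x)}\leq\sum_n\int_{|z_n|-r_n}^{|z_n|+r_n}\frac{dx}{\L(x)}+O(1)\leq\sum_n\frac{2r_n}{\L(|z_n|-r_n)}+O(1)
\]
is finite by \eqref{logsum-L}.
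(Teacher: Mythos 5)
Your reduction to $\phi=0$, the bracketing of the intersection abscissas, and the computation showing $c_2-c_1\to 0$ (equivalently $|J_n|\to 0$) all match the paper's argument. The one slip in that part is your justification of $r_n/\L(x_n)\to 0$: since $\L(x_n)\le\L(|z_n|)$, the inequality goes the wrong way for a deduction from \eqref{logsum-L}; the paper instead writes $r_n/\L(x_n)=(r_n/x_n)\cdot(x_n/\L(x_n))$ and invokes \eqref{technical0} together with \eqref{fast}. The genuine gap is the step you flag and then leave unproved: ``a careful density / Baire-category argument \ldots rules out this configuration'' is not an argument, and it is exactly here that the paper does something concrete. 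The paper argues by contradiction: if $[c_0-\veps,c_0+\veps]\subset C$, then, since this interval is uncountable while $\{D_n\}$ is countable, there exist $d_1<d_2$ and a subsequence $\{D_{n_k}\}$ such that both curves $y=d_j\L(x)$ meet every $D_{n_k}$; by convexity $[d_1,d_2]\subset J_{n_k}$ for all $k$, contradicting $|J_{n_k}|\to 0$. What your write-up is missing is precisely this extraction of a fixed pair $d_1<d_2$ lying in infinitely many common $J_n$.

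That said, your instinct that this is the hard point is sound, and you should be aware that the configuration to be excluded is not excluded by $|J_n|\to 0$ alone: the intervals $[k/m,(k+1)/m]$, $0\le k<m$, $m\in\N$, have lengths tending to zero and cover $[0,1]$ infinitely often, yet no two distinct points lie in infinitely many common intervals, so the ``uncountable versus countable'' pigeonhole needs more than cardinality. Worse, the configuration appears realizable under the stated hypotheses: with $\L(x)=x^2$, $x_n=n^2$, $r_n=n$ and $y_n=c_nx_n^2$, the curve $y=c\L(x)$ passes through $(x_n\sqrt{c_n/c},\,y_n)$, which lies in $D_n$ whenever $|\sqrt{c_n/c}-1|<1/(2n)$, so $|J_n|\gtrsim c_n/n$ even though $\sum_n r_n/\L(|z_n|)<\infty$ and $r_n/x_n\to0$; choosing the $c_n$ to tile $(1,2)$ in successive blocks makes every $c\in(1,2)$ lie in infinitely many $J_n$. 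So the divergence of $\sum_n|J_n|$ that you correctly identify is not a technical nuisance to be argued away by Baire category: it indicates that closing your gap requires either making the paper's extraction step rigorous (which the above suggests cannot be done in general) or adding a hypothesis that controls $\sum_n|J_n|$ rather than only $\sum_n r_n/\L(|z_n|)$.
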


\begin{remark}
(a) The assertion ''no interior points'' in Theorem~\ref{thm2/5} is weaker than the corresponding
assertion ''zero measure'' in Theorems~\ref{thm1} and \ref{thm2}. This is due to the rapid growth
of the function $\L(x)$ that causes the following phenomenon: In \eqref{cc} and \eqref{cc2} the
upper bound for $c_2-c_1$ constitutes of terms whose sum converges, whereas in proving
Theorem~\ref{thm2/5} the best upper bound for $c_2-c_1$ constitues of terms tending to zero (possibly
very slowly). However, since the set $C$ has no interior points, 
there is a dense set of curves $\partial\Lambda(\L,\phi,c)$
which meet at most finitely many discs $D(z_n,r_n)$.

(b) If $\phi=0$ and $z_n=x_n+iy_n$, then the technical assumption \eqref{technical0} reduces to
	\begin{equation}\label{technical}
	\lim_{n\to\infty}\frac{r_n}{x_n}=0.
	\end{equation}
A slightly weaker assumption $x_n-r_n>0$ would only assure that the discs
$D_n$ lie entirely in the right half-plane, same as the curves $y=\pm c\L(x)$. The angle in
which each disc $D(z_n,r_n)$ subtends at the origin is $2\theta_n$, where $\sin\theta_n =\frac{r_n}{|z_n|}$.
It follows from \eqref{technical} that $\theta_n\to 0$ as $n\to\infty$.
\end{remark}

We sketch the proof of Theorem~\ref{thm2/5} in the case $\phi=0$, that is, we consider
the curves in \eqref{easy-curves2/5}. Denote $z_n=x_n+iy_n$. If there exists an $M>0$ such that
$x_n\leq M$ for all $n\in\N$, then $r_n\to 0$ from \eqref{technical}, and so the discs $D_n$ are
in the half-plane $\{z\in\C:
\Re(z)\leq M+1\}$, with finitely many possible exceptions. Thus every $y=c\L(x)$ meets at most
finitely many discs $D_n$. Consequently, we may suppose that $x_n\to\infty$ as $n\to\infty$.

\begin{figure}[H]\label{convex-plane}
    \begin{center}
    \begin{tikzpicture}[scale=0.8]
    \draw[->](-1,0)--(6,0)node[left,below]{$x$};
    \draw[->](0,-1)--(0,9)node[right]{$y$};
    \draw[domain=0:2] plot(\x,{2*(\x)^2})node[right,font=\tiny]{$y=c_2\L(x)$};
    \draw[domain=0:4] plot(\x,{0.5*(\x)^2})node[right,font=\tiny]{$y=c_1\L(x)$};
    \draw[thick,blue](2.58,6) circle [radius=23.5pt];
    \draw[thin](2.58,6) circle [radius=0.4pt]node[above=0.001pt]{$z_n$};
    \draw[thick](1.75,6.1) circle [radius=0.5pt]node[below=0.01pt,font=\tiny,left]{$(a_2,c_2\L(a_2))$};
    \draw[thick](3.4,5.75) circle [radius=0.5pt]node[above=0.1pt,font=\tiny,right]{$(a_1,c_1\L(a_1))$};
    \draw[-,dashed](0,0)--(2.58,6);
    \draw[-,dashed](2.58,6)to node[above=0.02pt,font=\tiny]{} (1.75,6.1);
    \draw[-,dashed](2.58,6) to node[above,font=\tiny]{} (3.4,5.75);
   \draw[-,dashed](0,0)--(1.75*1.2,6.1*1.2);
    \draw[-,dashed](0,0)--(3.4*1.2,5.75*1.2);
    \end{tikzpicture}
    \end{center}
	\begin{quote}
    \caption{Rapidly increasing convex curves.}
    \end{quote}
    \end{figure}
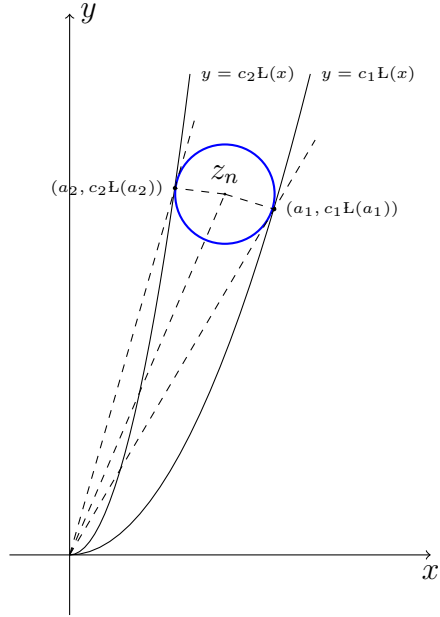

Suppose that a disc $D_n$ lies asymptotically between the curves $y=c_1 \L(x)$ and $y=c_2 \L(x)$, where $c_2>c_1>0$. Let $\zeta_j=a_j+ib_j$, $j=1,2$, be the corresponding points of intersection.
Let $c=y_n/\L(x_n)$. Then the curve $y=c\L(x)$ travels through the
center point $z_n$ of $D_n$, and $c_1<c<c_2$. Now
	\begin{eqnarray*}
	c_2-c_1 = &&\!\! \frac{b_2}{\L(a_2)}-\frac{b_1}{\L(a_1)}\leq
	\frac{y_n+r_n}{\L(x_n-r_n)}-\frac{y_n-r_n}{\L(x_n+r_n)}\\
	\leq &&\!\! \frac{2r_n}{\L(x_n-r_n)}
	+\frac{y_n}{\L(x_n)}
	\left(\frac{\L(x_n)}{\L(x_n-r_n)}-\frac{\L(x_n)}{\L(x_n+r_n)}\right)\\
	\leq &&\!\! \frac{2r_n}{x_n}\cdot\frac{x_n}{\L(x_n)}
	\cdot\frac{\L(x_n)}{\L\left(x_n\left(1-\frac{r_n}{x_n}\right)\right)}\\
	 &&\!\!+c\left(\frac{\L(x_n)}{\L\left(x_n\left(1-\frac{r_n}{x_n}\right)\right)}-\frac{\L(x_n)}{\L\left(x_n\left(1+\frac{r_n}{x_n}\right)\right)}\right),
	\end{eqnarray*}
where the upper bound tends to zero as $n\to\infty$ by \eqref{fast},
\eqref{dumpling-cond0} and \eqref{technical}.

Suppose on the contrary to the assertion that the set $C$ has an interior point $c_0$. Then
there exists a small constant $\varepsilon>0$ such that $J_\varepsilon:=[c_0-\varepsilon,c_0+\varepsilon]\subset C$.
Thus each curve $y=c\L(x)$ with $c\in J_\varepsilon$ meets infinitely many
discs $D_n$. Since the interval $J_\varepsilon$
is uncountable, while the sequence $\{D_n\}$ is only countable, there must exist
$d_1,d_2\in J_\varepsilon$ with $d_2-d_1=\delta>0$ and a subsequence $\{D_{n_k}\}$
such that both of $y=d_j\L(x)$, $j=1,2$, meet every disc $D_{n_k}$. In fact, using convexity,
it follows that each curve $y=d\L(x)$ with $d\in [d_1,d_2]$ meets every disc $D_{n_k}$.
From the discussion above, we may find an $N(\delta)\in\N$ such that
	$$
	0<\delta=d_2-d_1\leq c_2-c_1<\delta/2,\quad n_k\geq N(\delta),
	$$
which is a contradiction. Hence $C$ has no interior points.

The remaining assertion $\int_E\frac{dx}{\L(x)}<\infty$ follows 
easily from \eqref{dumpling-cond0}, \eqref{logsum-L} and \eqref{technical}.


\section{Results for curves in the unit disc}\label{disc-curves}

The continuous transition method presented in Section~\ref{plane-curves}
can be formulated in the unit disc $\D$ as well. Just as above, the theory divides into concave and convex situations. We begin with some preliminary remarks.

\subsection{Preliminary remarks}\label{remarks}

A typical size for an exceptional set $E\subset [0,1)$ in
value distribution theory is finite logarithmic measure in the sense of $\int_{E}\frac{dx}{1-x}<\infty$. In particular, if a sequence of Euclidean
discs $\Delta(z_n,r_n)\subset\D$ satisfies
	\begin{equation}\label{sum-D}
	\sum_n\frac{r_n}{1-|z_n|}<\infty,
	\end{equation}
then $\frac{r_n}{1-|z_n|}\to 0$ as $n\to\infty$, and the projection set $E\subset [0,1)$ satisfies
	\begin{eqnarray*}
	\int_{E}\frac{dx}{1-x}&\leq& \sum_n\int_{|z_n|-r_n}^{|z_n|+r_n}\frac{dx}{1-x}+O(1)\\
	&\leq & \sum_n \frac{2r_n}{(1-|z_n|)\left(1-\frac{r_n}{1-|z_n|}\right)}+O(1)<\infty.
	\end{eqnarray*}
Thus the condition \eqref{sum-D} is also natural in the value distribution theory.

Recall that a Stolz angle with vertex at $\zeta\in\partial\D$ is a set
	$$
	S(\zeta,c)=\left\{z\in\D : |1-\overline{\zeta}z|<c(1-|z|)\right\},
	$$
where $c>1$ is some constant \cite{Girela}. For $c=1$ this set reduces to the line segment $[0,1)$, while for $c<1$ the set is empty.
If the points $z_n$ are in a Stolz angle with vertex at $z=1$, then the quantities $1-|z_n|$ and $|1-z_n|$ are uniformly comparable \cite[Lemma~4]{Girela0}. The condition \eqref{sum-D} then implies that the	 subtending angles at the point $z=1$ have a finite sum. Hence, for any $\veps>0$ there exists a positive integer $N(\veps)$ such that
	$$
	\sum_{n\geq N(\veps)}\frac{r_n}{|1-z_n|}<\veps.
	$$
This implies that the set of points $c\in\R$ for which the lines $y=c(x-1)$
(through the point $z=1$) meets infinitely many discs $\Delta(z_n,r_n)$ has linear measure zero. From this point of view, such a collection of discs $\Delta(z_n,r_n)$ is a unit disc analogue for the $\E$-set.
For more details, see Section~\ref{Stolz}.

Analogous to $R$-sets, typical situations when the diameters of a given collection of discs in $\D$ have a finite sum arise, for example, when $\{z_n\}$ is a Blaschke sequence ($r_n=1-|z_n|$) or when $\{z_n\}$ has exponent of convergence
$\lambda\geq 0$ ($r_n=(1-|z_n|)^{1+\lambda+\veps}$). These cases are very well-known.

As for the curves tending to a boundary point $\zeta\in\partial\D$, the boundaries of  Stolz angles
are a good starting point. In addition to Stolz angles, we take as our basic model the domains
	$$
	R(\zeta,\gamma,c)=\left\{z\in\D : |1-\overline{\zeta}z|<c(1-|z|)^\gamma\right\},
	$$
where $\gamma>0$ and $c>0$ are some constants \cite{Ahern,Girela}. The special case $\gamma=1/2$ corresponds to a horodisc at $\zeta$ \cite{Girela}. In general, $R(\zeta,\gamma,c)$ is a tangential domain for $\gamma\in (0,1)$ and a represents a zero angle at $\zeta$ for $\gamma>1$. For example, the angular domain between the curves $y=\pm (1-x)^p$, $p>1$, is a zero angle at $z=1$ because both curves have horizontal tangents at the point $(1,0)$. If $\zeta=1$, the boundary $\partial R(1,\gamma,c)$ in the upper half-disc is concave for $0<\gamma<1$ and convex for $\gamma>1$. The extremal case $\gamma=1$ in between these two cases corresponds to the constant case in Section~\ref{C}.

\subsection{Concave curves}\label{k}

Let $l:(0,1)\to (0,1)$ be a strictly increasing, continuous and concave  function such that
	\begin{equation}\label{k-zero}
	\lim_{x\to 1^-}\frac{1-x}{l(1-x)}=0.
	\end{equation}
In addition, we assume that $l$ satisfies the following
doubling-type property: For every function $\delta:(0,1)\to (0,1)$ with
$\delta(x)\to 0$ as $x\to 1^-$ and for any $\gamma>0$ there exists a constant $\tau=\tau(\delta,\gamma)>0$ such that
	\begin{equation}\label{dumpling-cond}
	\lim_{x\to 1^-}\frac{l(1-x)}{l((1-x)(\gamma-\delta(x)))}=
	\lim_{x\to 1^-}\frac{l(1-x)}{l((1-x)(\gamma+\delta(x)))}=\tau.
	\end{equation}	
For example, the functions $x\mapsto x^a$, $0<a<1$, satisfy all of the conditions above. Since the functions $x\mapsto x^a$, $a\geq 1$, also satisfy
\eqref{dumpling-cond}, we deduce that \eqref{dumpling-cond} can be
satisfied by both convex and concave functions. The function $x\mapsto \exp(-1/x)$ shows that not all continuous functions satisfy \eqref{dumpling-cond}, which can easily
be verified by choosing $\delta(x)=\sqrt{1-x}$.

A countable collection of Euclidean discs $\Delta(z_n,r_n)\subset\D$ for which
\begin{equation}\label{logsum3}
	\sum_n \frac{r_n}{l(1-|z_n|)}<\infty,
	\end{equation}
is called an $l$-set. For any $\zeta\in\partial\D$ and any $c>0$, we denote
	$$
	\Gamma(l,\zeta,c)=\left\{z : |1-\overline{\zeta}z|< cl(1-|z|)\right\}.
	$$

\begin{theorem}\label{thm3}
Let $U$ be an $l$-set, and let $\zeta\in\partial\D$. Suppose that
	\begin{equation}\label{extra-assumption}
	\lim_{n\to\infty}\frac{r_n}{1-|z_n|}=0.
	\end{equation}
Then the set $C\subset (0,\infty)$ of values $c$ for which the curve $\partial\Gamma(l,\zeta,c)$
meets infinitely many discs $\Delta(z_n,r_n)$ has no interior points. Moreover, the projection
$E$ of $U$ onto the interval $[0,1)$ satisfies $\int_E\frac{dx}{l(1-x)}<\infty$.
\end{theorem}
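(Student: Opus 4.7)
The plan is to mirror the sketch of Theorem~\ref{thm2/5}, transferring the half-plane geometry to the disc geometry around the boundary point $\zeta$. After a rotation I may assume $\zeta=1$, so the boundary $\partial\Gamma(l,1,c)$ consists of the level curves $|1-z|=c\,l(1-|z|)$ accumulating at $z=1$. Writing $a_n:=1-|z_n|$ and $b_n:=|1-z_n|$, both positive with $b_n\geq a_n$, I first note that a disc $\Delta(z_n,r_n)$ with $|z_n|$ bounded away from $1$ can meet at most finitely many boundary curves $\partial\Gamma(l,1,c)$ whose parameter $c$ is confined to a bounded set; after passing to a subsequence we may therefore assume $a_n\to 0^+$.

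The core step is a geometric estimate. If $\partial\Gamma(l,1,c_1)$ and $\partial\Gamma(l,1,c_2)$ with $c_1<c_2$ both intersect $\Delta(z_n,r_n)$ at points $\zeta_1,\zeta_2$, then $c_i=|1-\zeta_i|/l(1-|\zeta_i|)$ while $|\zeta_i-z_n|\leq r_n$. Monotonicity of $l$ together with the bounds $|1-\zeta_i|\in[b_n-r_n,b_n+r_n]$ and $1-|\zeta_i|\in[a_n-r_n,a_n+r_n]$ yields
\begin{equation*}
c_2-c_1\leq \frac{b_n+r_n}{l(a_n-r_n)}-\frac{b_n-r_n}{l(a_n+r_n)}=\frac{r_n}{l(a_n-r_n)}+\frac{r_n}{l(a_n+r_n)}+\frac{b_n}{l(a_n)}\Bigl(\frac{l(a_n)}{l(a_n-r_n)}-\frac{l(a_n)}{l(a_n+r_n)}\Bigr).
\end{equation*}
Each of the first two terms factors as $(r_n/a_n)(a_n/l(a_n))(l(a_n)/l(a_n\mp r_n))$, whose three factors tend respectively to $0$ by \eqref{extra-assumption}, to $0$ by \eqref{k-zero}, and to the constant $\tau$ supplied by \eqref{dumpling-cond} (invoked with $\gamma=1$ and any $\delta(x)\to 0$ interpolating the values $r_n/a_n$). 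The bracketed difference in the third term tends to $\tau-\tau=0$ by the same application of \eqref{dumpling-cond}, while the prefactor $b_n/l(a_n)$ is just the parameter value of the curve through $z_n$ and hence bounded once we restrict to $c$ in a bounded range. Consequently $c_2-c_1\to 0$ as $n\to\infty$.

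Next comes the contradiction. Assuming $C$ has an interior point $c_0$ yields $J_\varepsilon:=[c_0-\varepsilon,c_0+\varepsilon]\subset C$, and I would invoke the same uncountable-versus-countable pigeonhole used in the sketch of Theorem~\ref{thm2/5}, together with the fact that for each $n$ the set of parameters $c$ whose curve meets $\Delta(z_n,r_n)$ is an interval (by continuity of the family $c\mapsto\partial\Gamma(l,1,c)$), to extract $d_1<d_2$ in $J_\varepsilon$ with $\delta=d_2-d_1>0$ and an infinite subsequence of discs met simultaneously by $\partial\Gamma(l,1,d_1)$ and $\partial\Gamma(l,1,d_2)$. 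Along this subsequence the geometric estimate yields $\delta\leq c_2-c_1\to 0$, the required contradiction, so $C$ has empty interior. For the projection set,
\begin{equation*}
\int_E\frac{dx}{l(1-x)}\leq \sum_n\int_{|z_n|-r_n}^{|z_n|+r_n}\frac{dx}{l(1-x)}+O(1)\leq \sum_n\frac{2r_n}{l(a_n-r_n)}+O(1),
\end{equation*}
and \eqref{dumpling-cond} reduces this to $O\bigl(\sum_n r_n/l(a_n)\bigr)<\infty$ by \eqref{logsum3}.

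I expect the geometric estimate to be the main technical obstacle: the sequence $r_n/a_n$ must be interpolated by a function $\delta(x)\to 0$ so that the conclusions of \eqref{dumpling-cond} apply with a single constant $\tau$ along the entire sequence $\{a_n\}$, and the vanishing of $l(a_n)/l(a_n-r_n)-l(a_n)/l(a_n+r_n)$ must be strong enough to dominate the bounded prefactor $b_n/l(a_n)$ uniformly in $n$. The combinatorial pigeonhole underlying the contradiction is of the same nature as the one already used in the sketch of Theorem~\ref{thm2/5} and should transfer verbatim.
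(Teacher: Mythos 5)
Your proposal is correct and follows the paper's own argument essentially step for step: the same reduction to $\zeta=1$, the same decomposition of $c_2-c_1$ into a radius term plus a difference of reciprocals controlled by the doubling-type condition \eqref{dumpling-cond}, the same pigeonhole contradiction imported from the proof of Theorem~\ref{thm2/5}, and the same projection estimate. The only (harmless) deviation is that you make the first error term vanish via \eqref{k-zero} and \eqref{extra-assumption}, whereas the paper invokes the summability \eqref{logsum3} for that term; both routes are valid.
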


\begin{remark}
(a) The condition \eqref{extra-assumption} resembles the condition
\eqref{technical}. The conclusion ''no interior points'' is the same as the
conclusion of Theorem~\ref{thm2/5}. This is again a result of the fact that
the terms constituting the upper bound for $c_2-c_1$ tend to zero, but possibly very slowly.

(b) The angle in which each disc $\Delta(z_n,r_n)$ subtends at the
boundary point $e^{i\arg(z_n)}$ is $2\varphi_n$, where $\sin\varphi_n =\frac{r_n}{1-|z_n|}$.
Therefore, it follows from \eqref{extra-assumption} that $\varphi_n\to 0$ as $n\to\infty$.

(c) The condition $\int_E\frac{dx}{l(1-x)}<\infty$ makes sense only
if the function $l$ satisfies $\int_0^1\frac{dx}{l(1-x)}=\infty$. This happens, for example,
when $l(x)=x\log\frac{e}{x}$. However, this
property for $l$ is not essential because the crux
of the result lies in representing the size of $U$ in the form \eqref{logsum3}.
\end{remark}

\begin{example}
The points $z_n=(1+e^{i/n})/2$ are on the horocycle $|z-1/2|=1/2$. This
corresponds to the situation when $l(x)=\sqrt{x}$. We have
	$$
	1-|z_n|^2=(1-\cos(1/n))/2.
	$$
Choosing $r_n=1/n^{2}\log^2(1+n)$, it follows that	
	$$
	\sum_{n=1}^\infty \frac{r_n}{\sqrt{1-|z_n|}}\leq
	\sum_{n=1}^\infty \frac{\sqrt{2}r_n}{\sqrt{1-|z_n|^2}}
	\asymp \sum_{n=1}^\infty \frac{1}{n\log^2(1+n)}<\infty.
	$$
Meanwhile,
	$$
	\frac{r_n}{1-|z_n|}\sim \frac{2r_n}{1-|z_n|^2}\sim
	8n^2r_n=\frac{8}{\log^2(1+n)}\to 0.
	$$
Thus the discs $\Delta(z_n,r_n)$ satisfy the assumptions
in Theorem~\ref{thm3}.
\end{example}

We sketch the proof of Theorem~\ref{thm3} in the case $\zeta=1$.
It suffices to consider the portion of $\partial\Gamma(l,0,c)$ in the upper half-disc,
call it $\Gamma^+(c)$ for short. Suppose that the disc $\Delta(z_n,r_n)$ lies asymptotically
between the curves $\Gamma^+(c_1)$ and $\Gamma^+(c_2)$, where $c_2>c_1$ and $z_n=x_n+iy_n$.
Denote the points of intersection on $\partial \Delta(z_n,r_n)$ by $\zeta_1$ and $\zeta_2$, respectively.

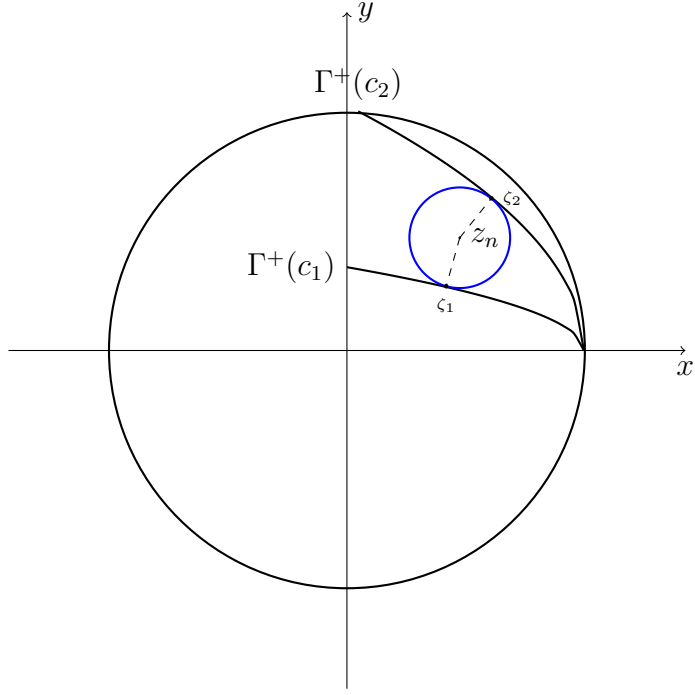
\begin{figure}[H]\label{measure}
    \begin{center}
    \begin{tikzpicture}[scale=1.5]
    \draw[->](-3,0)--(3,0)node[left,below]{$x$};
    \draw[->](0,-3)--(0,3)node[right]{$y$};
    \draw[domain=2.1:0.1,smooth,thick] plot(\x,{1.5*sqrt(2.1-\x)})node[left,above,thick]{$\Gamma^+(c_2)$};
    \draw[domain=2.1:0,smooth,thick] plot(\x,{0.51*sqrt(2.1-\x)})node[left,thick]{$\Gamma^+(c_1)$};
    \draw[thick,blue](1,1) circle [radius=12.7pt];
    \draw[thick](0,0) circle [radius=60pt];
    \draw[thin](1,1) circle [radius=0.2pt]node[right]{$z_n$};
   \draw[thick](1.28,1.35) circle [radius=0.3pt]node[right,thick,font=\tiny]{$\zeta_2$};
   \draw[thick](0.88,0.57) circle [radius=0.3pt]node[below,thick,font=\tiny]{$\zeta_1$};
   \draw[-,dashed](1,1)--(1.28,1.35);
   \draw[-,dashed](1,1)--(0.88,0.57);
    \end{tikzpicture}
    \end{center}
	\begin{quote}
    \caption{Concave case in the unit disc.}
    \end{quote}
    \end{figure}

From \eqref{k-zero}, we see that the curves $\Gamma^+(c_1)$ and $\Gamma^+(c_2)$ are essentially above the descending line $y=1-x$. Therefore, by
passing through a subsequence, if necessary, we may suppose that $y_n\geq 1-x_n$. Then, using concavity,
	\begin{eqnarray*}
	c_2-c_1=&&\!\!\frac{|1-\zeta_2|}{l(1-|\zeta_2|)}-\frac{|1-\zeta_1|}{l(1-|\zeta_1|)}\leq \frac{|1-z_n|+r_n}{l(1-|\zeta_2|)}-\frac{|1-z_n|-r_n}{l(1-|\zeta_1|)}\\
	\leq &&\!\! \frac{2r_n}{l(1-|\zeta_2|)}+|1-z_n|\left(\frac{1}{l(1-|\zeta_2|)}-\frac{1}{l(1-|\zeta_1|)}\right)\\
	\leq && \!\!\frac{2r_n}{l(1-|z_n|-r_n)}+|1-z_n|\left(\frac{1}{l(1-|z_n|-r_n)}-\frac{1}{l(1-|z_n|+r_n)}\right)\\
	=&& \!\!\frac{2r_n}{l(1-|z_n|)}\cdot\frac{l(1-|z_n|)}{l\left((1-|z_n|)\left(1-\frac{r_n}{1-|z_n|}\right)\right)}\\
	 &&\!\!+\frac{|1-z_n|}{l(1-|z_n|)}\left(\frac{l(1-|z_n|)}{l\left((1-|z_n|)\left(1-\frac{r_n}{1-|z_n|}\right)\right)}-\frac{l(1-|z_n|)}{l\left((1-|z_n|)\left(1+\frac{r_n}{1-|z_n|}\right)\right)}\right).
	\end{eqnarray*}
Here $|1-z_n|/l(1-|z_n|)=c$ for some $c\in (c_1,c_2)$, so that the
upper bound tends to zero as $n\to\infty$ by the assumptions
\eqref{dumpling-cond}, \eqref{logsum3} and \eqref{extra-assumption}.
The assertion that the set $C$ has no interior points follows now analogously as in the proof of
Theorem~\ref{thm2/5}.

The remaining assertion follows from
	\begin{eqnarray*}
	\int_{E}\frac{dx}{l(1-x)}&\leq& \sum_n\int_{|z_n|-r_n}^{|z_n|+r_n}\frac{dx}{l(1-x)}+O(1)\leq\sum_n \frac{2r_n}{l(1-|z_n|-r_n)}+O(1)\\
	&= & \sum_n \frac{2r_n}{l\left((1-|z_n|)\left(1-\frac{r_n}{1-|z_n|}\right)\right)}+O(1)<\infty,
	\end{eqnarray*}
where we have used \eqref{dumpling-cond}, \eqref{logsum3} and \eqref{extra-assumption}.

\subsection{Convex curves}\label{l}

Let $k:(0,1)\to (0,1)$ be a strictly increasing, continuous and convex function such that
	\begin{equation}\label{l-zero}
	\lim_{x\to 1^-}\frac{1-x}{k(1-x)}=\infty.
	\end{equation}
In addition, we assume that $k$ satisfies the
doubling-type property in \eqref{dumpling-cond}.
For example, the functions $x\mapsto x^a$, $a>1$, satisfy all of the conditions above.

A countable collection of Euclidean discs $\Delta(z_n,r_n)\subset\D$ for which
\begin{equation}\label{logsum4}
	\sum_n \frac{r_n}{k(1-|z_n|)}<\infty,
	\end{equation}
is called a $k$-set. For any $\zeta\in\partial\D$ and any $c>0$, we denote
	$$
	\Gamma(k,\zeta,c)=\left\{z : |1-\overline{\zeta}z|< ck(1-|z|)\right\}.
	$$

\begin{theorem}\label{thm4}
Let $U$ be a $k$-set, and let $\zeta\in\partial\D$. Then the set $C\subset (0,\infty)$
of values $c$ for which the curve $\partial\Gamma(k,\zeta,c)$
meets infinitely many discs $\Delta(z_n,r_n)$ has no interior points. Moreover, the projection $E$ of $U$
onto the interval $[0,1)$ satisfies $\int_E\frac{dx}{k(1-x)}<\infty$.
\end{theorem}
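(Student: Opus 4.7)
The plan is to imitate the sketch of Theorem~\ref{thm3} essentially line for line, since the convex case for $k$ on the disc parallels the concave case for $l$ almost perfectly. A preliminary observation is that the analogue of the extra assumption \eqref{extra-assumption} is automatic here: from the $k$-set condition \eqref{logsum4} one has $\frac{r_n}{k(1-|z_n|)}\to 0$, while \eqref{l-zero} gives $\frac{k(1-|z_n|)}{1-|z_n|}\to 0$; multiplying the two yields $\frac{r_n}{1-|z_n|}\to 0$. Hence I may introduce $s_n:=\frac{r_n}{1-|z_n|}\to 0$ and proceed as if this condition had been assumed.

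After rotating to reduce to $\zeta=1$ and working with the upper-half piece $\Gamma^+(c)$ of $\partial\Gamma(k,1,c)$, I would note that \eqref{l-zero} makes each $\Gamma^+(c)$ approach $z=1$ inside a zero-angle tangential region (in contrast to the wide tangential region from Section~\ref{k}), so only discs eventually contained in such a region can meet infinitely many curves; we may restrict attention to these. Suppose $\Delta(z_n,r_n)$ lies asymptotically between $\Gamma^+(c_1)$ and $\Gamma^+(c_2)$ with intersection points $\zeta_1,\zeta_2$. Using $|\zeta_j|\in[|z_n|-r_n,|z_n|+r_n]$ together with monotonicity of $k$, the same sandwich computation as in the concave proof gives
\begin{align*}
c_2-c_1 &\leq \frac{2r_n}{k(1-|z_n|)}\cdot\frac{k(1-|z_n|)}{k\bigl((1-|z_n|)(1-s_n)\bigr)}\\
&\quad + \frac{|1-z_n|}{k(1-|z_n|)}\left(\frac{k(1-|z_n|)}{k\bigl((1-|z_n|)(1-s_n)\bigr)} - \frac{k(1-|z_n|)}{k\bigl((1-|z_n|)(1+s_n)\bigr)}\right),
\end{align*}
the direct analogue of the central estimate in the proof of Theorem~\ref{thm3}.

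Next, I would pass to the limit. The factor $\frac{|1-z_n|}{k(1-|z_n|)}$ equals some $c\in(c_1,c_2)$ and so remains bounded. By the doubling condition \eqref{dumpling-cond} applied with $\gamma=1$, both quotients $\frac{k(1-|z_n|)}{k((1-|z_n|)(1\mp s_n))}$ tend to a common limit $\tau$, so their difference vanishes; combined with $\frac{r_n}{k(1-|z_n|)}\to 0$, this forces $c_2-c_1\to 0$ along the subsequence in question. The ``no interior points'' conclusion then follows by the same pigeonhole contradiction used at the end of the Theorem~\ref{thm2/5} sketch: if $C$ contained an interval of length $\delta>0$, one picks $d_1<d_2$ in it and extracts a subsequence along which both $\Gamma^+(d_1)$ and $\Gamma^+(d_2)$ meet every disc, yielding $c_2-c_1\geq\delta$ and contradicting the vanishing above. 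The projection estimate $\int_E\frac{dx}{k(1-x)}<\infty$ is then closed out by the same integral computation as in Theorem~\ref{thm3}, using \eqref{dumpling-cond} and \eqref{logsum4}.

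The one place where a little care is warranted --- and which I expect to be the main, though still mild, obstacle --- is the geometric setup: because the convex curves $\Gamma^+(c)$ cut out a narrow zero-angle region at $\zeta=1$ rather than the wide tangential region obtained from concave $l$, the ``asymptotic square'' picture underlying the sandwich must be redrawn. However, the only ingredient actually used is $|\zeta_j|\in[|z_n|-r_n,|z_n|+r_n]$, which holds for any point of $\partial\Delta(z_n,r_n)$ by the triangle inequality; so once the picture is correctly drawn, the rest of the argument is mechanical.
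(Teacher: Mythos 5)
Your proposal is correct and follows essentially the same route the paper intends: the authors omit the proof of Theorem~\ref{thm4} precisely because it repeats the sketch of Theorem~\ref{thm3} verbatim, with the single remark that \eqref{extra-assumption} need not be assumed since it follows from \eqref{logsum4} together with \eqref{l-zero} --- exactly the multiplication of $\frac{r_n}{k(1-|z_n|)}\to 0$ and $\frac{k(1-|z_n|)}{1-|z_n|}\to 0$ that you carry out. Your sandwich estimate, the use of \eqref{dumpling-cond} with $\gamma=1$, the pigeonhole argument from Theorem~\ref{thm2/5}, and the closing projection integral all match the paper's argument.
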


The proof is very similar to that of Theorem~\ref{thm3}, and hence
is omitted. The only
difference basically is that this time we do not need to assume
\eqref{extra-assumption} as it follows trivially from \eqref{logsum4}
by means of \eqref{l-zero}.

\begin{figure}[H]\label{measure}
    \begin{center}
    \begin{tikzpicture}[scale=1.5]
    \draw[->](-3,0)--(3,0)node[left,below]{$x$};
    \draw[->](0,-3)--(0,3)node[right]{$y$};
    \draw[domain=2.1:0.9,smooth,thick] plot(\x,{1.95*(2.1-\x)^2})node[left,above,thick]{$\Gamma^+(c_2)$};
    \draw[domain=2.1:0,smooth,thick] plot(\x,{0.29*(2.1-\x)^2})node[left,thick]{$\Gamma^+(c_1)$};
    \draw[thick,blue](0.9,1) circle [radius=12.7pt];
    \draw[thick](0,0) circle [radius=60pt];
    \draw[thin](0.9,1) circle [radius=0.18pt]node[right,above]{$z_n$};
   \draw[thick](1.33,1.15) circle [radius=0.3pt]node[right,thick,font=\tiny]{$\zeta_2$};
   \draw[thick](0.6,0.65) circle [radius=0.3pt]node[below,thick,font=\tiny]{$\zeta_1$};
   \draw[-,dashed](0.9,1)--(1.33,1.15);
   \draw[-,dashed](0.9,1)--(0.6,0.65);
    \end{tikzpicture}
    \end{center}
	\begin{quote}
    \caption{Convex case in the unit disc.}
    \end{quote}
    \end{figure}
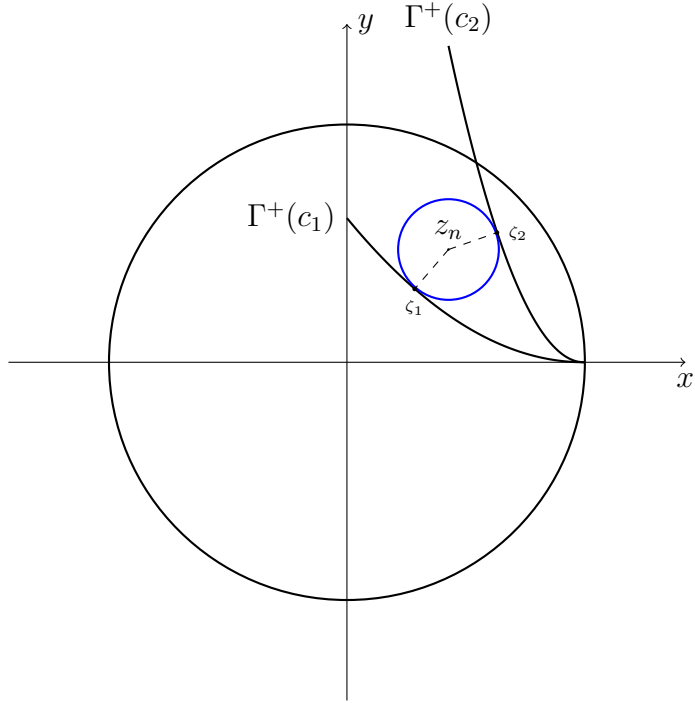
    
Finally we remark that, due to the condition \eqref{l-zero}, the
projection set $E\subset[0,1)$ satisfying $\int_E\frac{dx}{k(1-x)}<\infty$
has to be very thin.

\subsection{Stolz angles}\label{Stolz}

Here we consider the particular choice $k(x)=x^\gamma$, where $x\in (0,1)$ and $\gamma\geq 1$.
As discussed in Section~\ref{remarks}, the cases $\gamma=1$ and $\gamma>1$ correspond
to a Stolz angle and its subset, a zero angle, respectively.

\begin{theorem}
Let $\gamma\geq 1$ and $\zeta\in\partial\D$, and suppose that $U$ is a collection of Euclidean
discs $\Delta(z_n,r_n)\subset\D$ such that
	\begin{equation}\label{convergence-Stolz}
	\sum_{n=1}^\infty \frac{r_n}{(1-|z_n|)^\gamma}<\infty.
	\end{equation}
Then the set $C\subset (1,\infty)$ of values $c$ for which
$\partial\left\{z : |1-\overline{\zeta}z|< c(1-|z|)^\gamma\right\}$ meets infinitely many discs $\Delta(z_n,r_n)$ has measure zero. Moreover, the projection $E$ of $U$
onto the interval $[0,1)$ satisfies $\int_E\frac{dx}{(1-x)^\gamma}<\infty$.
\end{theorem}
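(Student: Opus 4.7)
The plan is to specialize the convex-case analysis from the proof of Theorem~\ref{thm4} to the explicit power function $k(x)=x^\gamma$, exploiting its exact scaling behavior to upgrade the conclusion from ``no interior points'' to ``measure zero''. After a rotation I may assume $\zeta=1$. Define
\[
f(z)=\frac{|1-z|}{(1-|z|)^\gamma},
\]
so that the portion of $\partial\{z\in\D:|1-z|<c(1-|z|)^\gamma\}$ inside $\D$ is the level set $\{f=c\}$, and the set of parameters $c$ for which this curve meets a fixed disc $\Delta(z_n,r_n)$ is the interval $f(\Delta(z_n,r_n))=:[c_1^{(n)},c_2^{(n)}]$. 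By countable additivity it suffices to prove $\meas(C\cap I)=0$ for every compact $I\subset(1,\infty)$; fix such an $I$ and set $M=\max I$.

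Since $(1-|z_n|)^{\gamma-1}\leq 1$ for $\gamma\geq 1$, the hypothesis \eqref{convergence-Stolz} forces $r_n/(1-|z_n|)\to 0$, and after discarding finitely many indices I may assume $r_n\leq(1-|z_n|)/2$ for all remaining $n$. Writing $a_n=|1-z_n|$, $b_n=1-|z_n|$ and $t_n=r_n/b_n\in[0,1/2]$, the triangle inequality yields
\[
c_2^{(n)}-c_1^{(n)}\leq\frac{a_n+r_n}{(b_n-r_n)^\gamma}-\frac{a_n-r_n}{(b_n+r_n)^\gamma}=\frac{a_n}{b_n^\gamma}\bigl[(1-t_n)^{-\gamma}-(1+t_n)^{-\gamma}\bigr]+\frac{r_n}{b_n^\gamma}\bigl[(1-t_n)^{-\gamma}+(1+t_n)^{-\gamma}\bigr].
\]
If $[c_1^{(n)},c_2^{(n)}]\cap I\neq\emptyset$, then $c_1^{(n)}\leq M$ gives $a_n\leq r_n+2^\gamma Mb_n^\gamma$, and the mean value theorem supplies $(1-t_n)^{-\gamma}-(1+t_n)^{-\gamma}\leq\gamma 2^{\gamma+2}t_n$ on $[0,1/2]$. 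Combining these with $b_n^{\gamma-1}\leq 1$ yields
\[
c_2^{(n)}-c_1^{(n)}\leq A_\gamma(M)\,\frac{r_n}{(1-|z_n|)^\gamma},
\]
where $A_\gamma(M)$ depends only on $\gamma$ and $M$.

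Given $\varepsilon>0$, choose $N(\varepsilon)$ with $\sum_{n\geq N(\varepsilon)}r_n/(1-|z_n|)^\gamma<\varepsilon$. Summing the previous estimate over $n\geq N(\varepsilon)$ gives $\meas(C\cap I)\leq A_\gamma(M)\varepsilon$, so $\meas(C\cap I)=0$. Writing $(1,\infty)=\bigcup_{k\geq 1}[1+1/k,k]$ and invoking countable additivity finishes the first assertion. The projection statement follows from the now-familiar calculation
\[
\int_E\frac{dx}{(1-x)^\gamma}\leq\sum_n\frac{2r_n}{(1-|z_n|-r_n)^\gamma}+O(1)\leq 2^{\gamma+1}\sum_n\frac{r_n}{(1-|z_n|)^\gamma}+O(1)<\infty,
\]
using monotonicity of $(1-x)^{-\gamma}$ and $r_n\leq(1-|z_n|)/2$.

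The main technical point is the estimate $c_2^{(n)}-c_1^{(n)}\leq A_\gamma(M)r_n/(1-|z_n|)^\gamma$. In the general setting of Theorem~\ref{thm4}, the analogous quantity could only be shown to tend to zero (yielding merely ``no interior points''), because the generic doubling-type assumption \eqref{dumpling-cond} did not control the coefficient $|1-z_n|/k(1-|z_n|)$ in a summable way. Here the exact scaling $k(tx)=t^\gamma k(x)$ lets one bound $|1-z_n|/(1-|z_n|)^\gamma$ uniformly by $M$ (up to negligible error) and replace the difference of inverse powers by a linear-in-$t_n$ quantity; together these two explicit features promote the upper bound to a genuine constant multiple of the summable quantity $r_n/(1-|z_n|)^\gamma$.
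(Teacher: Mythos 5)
Your proof is correct and follows essentially the same route as the paper's sketch: restrict to a compact parameter set $I$, observe that any disc met by a curve with $c\in I$ lies (up to the $r_n$ correction) in a Stolz angle so that $|1-z_n|\lesssim (1-|z_n|)^\gamma\le 1-|z_n|$, and bound the length of the parameter interval swept by each disc by a constant multiple of the summable quantity $r_n/(1-|z_n|)^\gamma$ (you use the mean value theorem where the paper uses the integral representation $\gamma\int x^{-1-\gamma}\,dx$, an immaterial difference). The only added value is that you make the constant $A_\gamma(M)$ and the reduction to compact $I$ fully explicit, which the paper delegates to the proofs of its Theorems 1--3.
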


We sketch the proof in the case $\zeta=1$ as follows. Since $|1-z_n|$ and $1-|z_n|$ are uniformly
comparable for $z_n$'s in a fixed Stolz angle (this is where we need $\gamma\geq 1$),
there exists a $K>0$ such that
	\begin{equation*}
	\begin{split}
	|1-z_n|&\left(\frac{1}{(1-|z_n|-r_n)^\gamma}-\frac{1}{(1-|z_n|+r_n)^\gamma}\right)\\
	& =|1-z_n|\gamma\int_{1-|z_n|-r_n}^{1-|z_n|+r_n}\frac{dx}{x^{1+\gamma}}
	\leq \frac{Kr_n}{(1-|z_n|-r_n)^\gamma}\\
	& = \frac{Kr_n}{(1-|z_n|)^\gamma\left(1-\frac{r_n}{1-|z_n|}\right)^\gamma}	
	\leq \frac{2Kr_n}{(1-|z_n|)^\gamma}
	\end{split}
	\end{equation*}
holds for all $n$ large enough. Following the proof of Theorem~\ref{thm3}, we have
	$$
	c_2-c_1\leq \frac{(4+2K)r_n}{(1-|z_n|)^\gamma}
	$$
for all $n$ large enough. Differing from the proof of Theorem~\ref{thm3}, the sum of the
terms in the upper bound converges by \eqref{convergence-Stolz}. To prove that the set $C$
is of measure zero, it suffices to follow the proof of Theorem~\ref{thm1} or the proof of
Theorem~\ref{thm2}. The remaining assertion is trivial.


\section{Applications}\label{ld-ld}

As applications of Sections~\ref{plane-curves}~and~\ref{disc-curves}, we discuss pointwise estimates
for logarithmic derivatives, logarithmic differences, and for
logarithmic $q$-differences. In addition, 
we discuss ways to avoid exceptional sets.

\subsection{Logarithmic derivatives in $\C$}

The paper \cite{Gundersen} is frequently cited in
the theory of complex differential equations. 
The extremal cases $K(x)\equiv x$ and $K(x)\equiv 1$ in Corollary~\ref{k-thm} below correspond to
Theorems~3 and 4 in \cite{Gundersen}, respectively. In fact, \cite[Theorem~4]{Gundersen}
is slightly improved because $r^\varepsilon$ in the upper bound is replaced with $\log^\alpha r$. We remind the reader of Theorem~\ref{thm1} according
to which a $K$-set can be avoided by almost every $K$-curve, and the
projection $E$ of a $K$-set on the interval $[1,\infty)$ satisfies
$\int_E\frac{dr}{K(r)}<\infty$.

\begin{corollary} \label{k-thm}
Let $f$ be a meromorphic function in $\C$, let $k$ and $j$ be integers such that $k>j\geq 0$,
and let $\alpha >1$ be a given constant. Suppose that $f^{(j)}\not\equiv 0$
and that
$K:[0,\infty)\to [0,\infty)$ is either the constant function $K(x)\equiv 1$ or a strictly increasing, continuous, concave function satisfying the doubling condition \eqref{double} and $K(x_0)=0$ for some $x_0\geq 0$.

Then there exists a $K$-set $U$ of discs $D(z_n,r_n)$ satisfying \eqref{logsum} and a constant $C>0$ depending only on $\alpha, k, j$ such that for all $z\not\in U$,
we have the following estimate (where $r=|z|$):
    \begin{equation}\label{k-thm-assertion}
    \left|\frac{f^{(k)}(z)}{f^{(j)}(z)}\right|
    \leq C\left(\frac{T(\alpha r,f)}{r}+n_j(\alpha r) \log^+n_j(\alpha r)\frac{\log^\alpha r}{K(r)}\right)^{k-j}.
    \end{equation}
Here $n_j(t)$ is the number of zeros and poles of $f^{(j)}$ in the disc $\{\zeta:|\zeta|\leq t\}$, counting multiplicities.
    \end{corollary}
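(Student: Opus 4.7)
The plan is to adapt Gundersen's method for pointwise logarithmic derivative estimates \cite{Gundersen} to the $K$-set framework of Section~\ref{K}. The starting point is the Poisson--Jensen formula applied to $f^{(j)}$ on the circle $|w|=R$ with $R:=\alpha r$. Differentiating $k-j$ times and taking moduli yields the classical bound
\begin{equation*}
\left|\frac{f^{(k)}(z)}{f^{(j)}(z)}\right| \leq C_0 \left(\frac{T(R,f^{(j)})}{R-r}+\sum_{|a_\mu|\leq R}\frac{1}{|z-a_\mu|}\right)^{k-j},
\end{equation*}
where $\{a_\mu\}$ enumerates the zeros and poles of $f^{(j)}$ with multiplicity. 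The identity $R-r=(\alpha-1)r$ together with the standard inequality $T(R,f^{(j)})\leq C_1 T(\alpha R,f)$ (after a harmless enlargement of $\alpha$) absorbs the first summand into $\tilde C\cdot T(\alpha r,f)/r$, matching the first term in \eqref{k-thm-assertion}.

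Next, I would construct the $K$-set $U$ around the zeros and poles of $f^{(j)}$. For each $a_\mu$ define
\begin{equation*}
r_\mu=\frac{K(|a_\mu|)}{n_j(|a_\mu|)\log^{1+\eta} n_j(|a_\mu|)}
\end{equation*}
with a small fixed $\eta>0$, and set $U:=\bigcup_\mu D(a_\mu,r_\mu)$. Convergence of $\sum_\mu r_\mu/K(|a_\mu|)$ follows by rewriting the series as a Stieltjes integral $\int dn_j(t)/(n_j(t)\log^{1+\eta} n_j(t))$ and substituting $s=n_j(t)$, reducing the question to $\int ds/(s\log^{1+\eta} s)<\infty$. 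Hence $U$ is genuinely a $K$-set in the sense of \eqref{logsum}, and Theorem~\ref{thm1} applies.

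The heart of the argument is the pointwise bound off $U$. For $z\notin U$ one has $|z-a_\mu|\geq r_\mu$, and I would split the sum in the starting bound according to whether $|a_\mu|\leq r/2$ or $r/2<|a_\mu|\leq R$. In the first range $|z-a_\mu|\geq r/2$ gives a harmless contribution $\lesssim n_j(\alpha r)/r$, absorbed into the $T(\alpha r,f)/r$ term. In the second range the doubling condition \eqref{double} yields $K(|a_\mu|)\asymp K(r)$ with a constant depending only on $\alpha$ and on the doubling constant of $K$, and a dyadic grouping of the shell $r/2<|a_\mu|\leq R$ combined with the iteration of \eqref{double} explained at the beginning of Section~\ref{K} produces
\begin{equation*}
\sum_{|a_\mu|\leq R}\frac{1}{|z-a_\mu|}\leq C\, n_j(\alpha r)\log^+ n_j(\alpha r)\cdot\frac{\log^\alpha r}{K(r)}.
\end{equation*}
Raising the resulting estimate to the $(k-j)$th power yields \eqref{k-thm-assertion}.

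The main obstacle I anticipate is this last dyadic estimate. The naive substitution $|z-a_\mu|\geq r_\mu$ gives $\sum_\mu 1/r_\mu$, which carries an unwanted \emph{second} factor of $n_j(\alpha r)$; reducing the power of $n_j(\alpha r)$ to one requires a finer combinatorial argument exploiting that only few $a_\mu$ can cluster near any fixed $z\notin U$. The promotion of Gundersen's polynomial factor $r^\varepsilon$ to the sharper logarithmic factor $\log^\alpha r$ is then a by-product of this refined book-keeping combined with the doubling hypothesis \eqref{double}. The extremal cases $K\equiv 1$ and $K(x)=x$ should recover \cite[Theorem~4]{Gundersen} (with the refinement noted in the paragraph preceding the statement) and \cite[Theorem~3]{Gundersen}, providing a natural consistency check.
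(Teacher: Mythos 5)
Your outline follows Gundersen's strategy at the level of the starting Poisson--Jensen estimate, but the construction you propose for the exceptional set cannot deliver the stated bound, and the obstacle you flag at the end is not a book-keeping issue that a ``finer combinatorial argument'' will dissolve: it is the whole point, and the paper resolves it with a tool you never invoke, namely Cartan's lemma on the minimum modulus. Concretely, if you take $U=\bigcup_\mu D(a_\mu,r_\mu)$ with discs \emph{centered at the zeros and poles} and radii $r_\mu=K(|a_\mu|)/\bigl(n_j(|a_\mu|)\log^{1+\eta}n_j(|a_\mu|)\bigr)$, then a single zero of multiplicity $n$ at a point $a$ already defeats you: for $z$ just outside $D(a,r_a)$ the corresponding block of the sum is $n/r_a\asymp n^2\log^{1+\eta}n/K(|a|)$, and to force it down to $n\log n/K(|a|)$ you would need $r_a\gtrsim K(|a|)/\log n$, which destroys the convergence of $\sum_\mu r_\mu/K(|a_\mu|)$. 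So no choice of radii for discs centered at the $a_\mu$ works in general; the exceptional discs must be allowed to sit elsewhere and to swallow clusters of points. That is exactly what Cartan's lemma provides: for the $\mu_\nu$ points in a fixed annulus $\A_\nu=\{\alpha^\nu\leq|\zeta|<\alpha^{\nu+1}\}$ and a parameter $d_\nu$, it produces discs of total diameter at most $4d_\nu$ outside of which $\sum_m 1/|z-b_m|<\frac{\mu_\nu}{d_\nu}\sum_{m=1}^{\mu_\nu}\frac1m\leq\frac{\mu_\nu}{d_\nu}(1+\log\mu_\nu)$, which is the single factor $n_j\log^+n_j$ you need.

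The second gap is the provenance of the factor $\log^\alpha r$, which in your write-up appears as an unexplained ``by-product.'' In the paper it comes from the explicit choice $d_\nu=K(\alpha^\nu)/(\log\alpha^\nu)^\alpha$ in each annulus: this simultaneously makes the Cartan bound equal to $\mu_\nu(1+\log\mu_\nu)\,(\log\alpha^\nu)^\alpha/K(\alpha^\nu)\lesssim n_j(\alpha^2 r)\log^+ n_j(\alpha^2 r)\,\log^\alpha r/K(r)$ (using the doubling condition \eqref{double} to pass from $K(\alpha^\nu)$ to $K(r)$), and makes the resulting collection a $K$-set, since $\sum_\nu d_\nu/K(\alpha^\nu)=\sum_\nu\bigl(\nu\log\alpha\bigr)^{-\alpha}<\infty$ precisely because $\alpha>1$. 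This is where the improvement of Gundersen's $r^\varepsilon$ to $\log^\alpha r$ actually happens, and it requires the annulus-by-annulus application of Cartan's lemma with these weights; your Stieltjes-integral verification of \eqref{logsum} is correct for the set you built, but for the wrong set.
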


We note that \cite[Theorem~1]{Chyzhykov0} is in the spirit of Corollary~\ref{k-thm} but
the estimate in it is obtained by other means and is valid outside of a fixed exceptional set that corresponds to the case $K(x)=x$. Meanwhile, the estimate in \eqref{k-thm-assertion} depends on the
size of the exceptional set determined by~$K$.

\bigskip
\noindent
\emph{Proof of Corollary~\ref{k-thm}.}
Let $\{a_m\}$ denote the sequence of zeros and poles of $f^{(j)}$ listed according to
multiplicity and ordered by increasing modulus. We follow the proof of 
\cite[Theorem~3]{Gundersen} up to (7.7), and set
    $$
    \mu_\nu=n(\alpha^{\nu+2})\quad\textnormal{and}\quad
    d_\nu=\frac{K(\alpha^\nu)}{(\log (\alpha^\nu))^{\alpha}},
    $$
where $\nu\geq \nu_0$ is an integer. We make this choice for the constants $d_\nu$
independently on whether $K(x)\equiv 1$ or not. We also suppose that $z$ is confined to the annulus 
$\A_\nu=\{\zeta:\alpha^\nu\leq |\zeta|< \alpha^{\nu+1}\}$, which
is equivalent to saying that
	$$
	\nu\log\alpha\leq \log r<(\nu+1)\log\alpha,\quad |z|=r.
	$$

Since $\alpha>1$ is fixed, there exists an integer $l\geq 1$ such that $\frac{1}{2^l}\leq \frac{1}{\alpha}<\frac{1}{2^{l-1}}$. We now make a standard use of Cartan's lemma as in \cite{Gundersen}. Indeed, if $z$ lies outside of Cartan's discs, then the following analogue of (7.9) in \cite{Gundersen} holds:
    \begin{eqnarray*} \label{ineq 1}
    \sum_{|a_m|\leq \alpha r}\frac{1}{|z-a_m|}
    &\leq& \sum_{m=1}^{\mu_\nu}\frac{1}{|z-b_m|}<\frac{\mu_\nu(\log \alpha^\nu)^{\alpha}}{K(\alpha^\nu)}\sum_{m=1}^{\mu_\nu}\frac{1}{m}\\
    &=& \frac{\mu_\nu(\log \alpha^\nu)^{\alpha}}{K(\frac{1}{\alpha}\alpha^{\nu+1})}\sum_{m=1}^{\mu_\nu}\frac{1}{m}\leq \mu_\nu(1+\log \mu_\nu)\frac{\log^{\alpha} r} {K(\frac{1}{2^l}\alpha^{\nu+1})}\\
    &\leq& \alpha^l\mu_\nu(1+\log \mu_\nu)\frac{\log^\alpha r}{K(r)} \leq \alpha^l n_j(\alpha^2 r)(1+\log n_j(\alpha^2 r))\frac{\log^\alpha r}{K(r)}\\
    &\leq& \alpha^{l+1} n_j(\alpha^2 r) \log n_j(\alpha^2 r)\frac{\log^\alpha r}{K(r)}.
    \end{eqnarray*}
The assertion \eqref{k-thm-assertion} follows from this similarly as in \cite[(7.10)]{Gundersen}.
Thus \eqref{k-thm-assertion} holds for a fixed $\nu\geq \nu_0$ when $z$ lies outside of Cartan's discs.

Similarly as in \cite{Gundersen}, we now consider all $\nu\geq\nu_0$,
where $\nu_0\geq 1$ is large enough.
Let $A_{\nu,1},\ldots,A_{\nu,l_\nu}$ denote precisely those Cartan
discs that intersect the annulus $\A_\nu.$
We mention that for some $\nu$ there might be no discs of this type.
It follows from Cartan's lemma that for each $\nu$ the total sum
of the diameters of the discs $A_{\nu,1},\ldots,A_{\nu,l_\nu}$
cannot exceed $4d_\nu$. Since $K(x)=O(x)$, we may choose $\nu_0$ large enough so that the
$\alpha^\nu>4d_\nu$ for all $\nu\geq\nu_0$. Then for $\nu\geq\nu_0$
the origin lies outside of the discs $A_{\nu,1},\ldots,A_{\nu,l_\nu}$.

Now let $D_n=D(z_n,r_n)$ denote the sequence $\{D_n\}$ of all the
discs $A_{\nu,i}$, where $\nu\geq\nu_0$ and $1\leq i\leq l_\nu$.
We have proved that \eqref{k-thm-assertion} holds if $z$ lies
outside of the discs $D_n$ and $\{\zeta:|\zeta|\leq \alpha^{\nu_0}\}$.
Moreover,
	\begin{eqnarray*}
	\sum_{n=1}^\infty\frac{r_n}{K(|z_n|)}
	&=& \sum_{\nu=\nu_0}^\infty \sum_{D_n\cap\A_\nu\neq\emptyset}\frac{r_n}{K(|z_n|)}\leq\sum_{\nu=\nu_0}^\infty\frac{2d_\nu}{K(\alpha^\nu-2d_\nu)}\\
	&=&\sum_{\nu=\nu_0}^\infty\frac{2d_\nu}{K(\alpha^\nu(1-2d_\nu/\alpha_\nu))}
	\leq \sum_{\nu=\nu_0}^\infty\frac{2d_\nu}{K(\alpha^\nu/2)}\\
	&\leq& \sum_{\nu=\nu_0}^\infty\frac{2\alpha d_\nu}{K(\alpha^\nu)}
	=\sum_{\nu=\nu_0}^\infty\frac{2\alpha}{\nu^\alpha (\log\alpha)^\alpha}
	<\infty.
	\end{eqnarray*}
This completes the proof. 
\hfill$\Box$

\subsection{Logarithmic derivatives in $\D$}

The extremal case $k(x)\equiv x$ in Corollary~\ref{k-thm-D} below corresponds to \cite[Theorem~3.1]{Chyzhykov}. We remind the reader of Theorem~\ref{thm4} according
to which a $k$-set can be avoided by almost every $k$-curve, and the
projection $E$ of a $k$-set on the interval $[0,1)$ satisfies
$\int_E\frac{dr}{k(1-r)}<\infty$.

\begin{corollary} \label{k-thm-D}
Let $f$ be a meromorphic function in $\D$, let $k$ and $j$ be integers such that $k>j\geq 0$,
let $\alpha\in (1,\infty)$ and $b\in (0,1)$ be given constants, and denote $s(r)=1-b(1-r)$. Suppose that $f^{(j)}\not\equiv 0$ and that
$k:(0,1)\to (0,1)$ is either the identity mapping $k(x)=x$ or a strictly increasing, continuous, convex function satisfying \eqref{l-zero} and the doubling-type condition \eqref{dumpling-cond}.

Then there exists a $k$-set $U$ of discs $D(z_n,r_n)$ satisfying \eqref{logsum4} and a constant $C>0$ depending only on $\alpha, b,k, j$ such that for all $z\not\in U$,
we have the following estimate (where $r=|z|$):
    \begin{equation}\label{k-thm-assertion-D}
    \left|\frac{f^{(k)}(z)}{f^{(j)}(z)}\right|
    \leq C\left(\frac{T(s(r),f)-\log (1-r)}{(1-r)^2}+W(r)\right)^{k-j},
    \end{equation}
where
	$$
	W(r)=\frac{n_j(s(r))}{k(1-r)}\log^+n_j(s(r))\log^\alpha\frac{1}{1-r}
	$$
and $n_j(t)$ is the number of zeros and poles of $f^{(j)}$ in the disc $\{\zeta:|\zeta|\leq t\}$, counting multiplicities.
    \end{corollary}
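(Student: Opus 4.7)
The plan is to mirror the proof of Corollary~\ref{k-thm}, replacing the dilation $r \mapsto \alpha r$ by the unit-disc substitute $r \mapsto s(r) = 1-b(1-r)$ and the polar annuli by hyperbolic annuli in $\D$. The starting point will be a pointwise estimate of Gundersen type, obtained from the Poisson--Jensen formula applied on the disc of radius $s(r)-r = (1-b)(1-r)$ centered at $z$. This is the approach used in \cite[Theorem~3.1]{Chyzhykov} and will yield
$$
\left|\frac{f^{(k)}(z)}{f^{(j)}(z)}\right| \leq C_1\left(\frac{T(s(r),f)-\log(1-r)}{(1-r)^2} + \sum_{|a_m|\leq s(r)} \frac{1}{|z-a_m|}\right)^{k-j},
$$
where $\{a_m\}$ is the sequence of zeros and poles of $f^{(j)}$, listed by multiplicity. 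Everything then reduces to bounding the sum on the right outside a suitable $k$-set.

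Second, I would partition $\D$ into hyperbolic annuli $\A_\nu = \{z\in\D : 1-\alpha^{-\nu} \leq |z| < 1-\alpha^{-\nu-1}\}$ for integers $\nu \geq \nu_0$, so that for $z \in \A_\nu$ with $r=|z|$ one has $1-r \asymp \alpha^{-\nu}$ and $\log(1/(1-r)) \asymp \nu\log\alpha$. I would then set
$$
\mu_\nu = n_j(1-\alpha^{-\nu-2}), \qquad d_\nu = \frac{k(\alpha^{-\nu})}{(\nu\log\alpha)^\alpha},
$$
and apply Cartan's lemma to those $a_m$ with $|a_m|\leq s(r) \leq 1-\alpha^{-\nu-2}$, producing at most $\mu_\nu$ exceptional discs of total diameter at most $4d_\nu$, outside which
$$
\sum_{|a_m|\leq s(r)} \frac{1}{|z-a_m|} \leq \frac{\mu_\nu(1+\log\mu_\nu)}{d_\nu}.
$$
Using the doubling-type property \eqref{dumpling-cond} to replace $k(\alpha^{-\nu})$ by $k(1-r)$ up to an absolute constant, this turns into
$$
\sum_{|a_m|\leq s(r)} \frac{1}{|z-a_m|} \leq C_2\, n_j(s(r))\log^+ n_j(s(r))\,\frac{\log^\alpha(1/(1-r))}{k(1-r)},
$$
which, combined with the Gundersen-type estimate above, delivers \eqref{k-thm-assertion-D}.

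Third, I would collect all the Cartan discs produced above into a single sequence $\{D(z_n,r_n)\}$ and verify \eqref{logsum4}. Each $D(z_n,r_n)$ intersecting $\A_\nu$ satisfies $r_n \leq d_\nu$ and $1-|z_n| \asymp \alpha^{-\nu}$; note that \eqref{extra-assumption}, required by Theorem~\ref{thm3} but not by Theorem~\ref{thm4}, holds automatically here because $d_\nu/\alpha^{-\nu}\to 0$ by \eqref{l-zero}. The doubling condition \eqref{dumpling-cond} then yields
$$
\sum_{n} \frac{r_n}{k(1-|z_n|)} \leq C_3 \sum_{\nu \geq \nu_0} \frac{d_\nu}{k(\alpha^{-\nu})} = C_3 \sum_{\nu \geq \nu_0} \frac{1}{(\nu\log\alpha)^\alpha} < \infty,
$$
since $\alpha > 1$, so that $\{D(z_n,r_n)\}$ is indeed a $k$-set in the sense of \eqref{logsum4}.

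The hard part will be handling the fact that the doubling-type property \eqref{dumpling-cond} is phrased as a limit rather than as an explicit inequality, unlike \eqref{double} in the plane case. To turn the comparison $k(\alpha^{-\nu}) \asymp k(1-r)$ for $z\in\A_\nu$ into a uniform one, I must choose $\nu_0$ large enough that the limits in \eqref{dumpling-cond} are attained up to a factor of, say, $2$ for all $\nu \geq \nu_0$; this introduces only an absolute constant into the final estimates and does not affect the convergence of the series above. The extremal case $k(x)=x$ runs in parallel and recovers \cite[Theorem~3.1]{Chyzhykov} exactly.
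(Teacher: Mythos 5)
Your outline follows essentially the same route as the paper: a Poisson--Jensen/Gundersen-type pointwise reduction to the sum $\sum_{|a_m|\leq s(r)}|z-a_m|^{-1}$, an annular decomposition of $\D$, Cartan's lemma with weights $d_\nu$ proportional to $k(\text{annulus scale})/\nu^\alpha$, and a final summation to verify \eqref{logsum4}. Two concrete points need repair, though.

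First, your annuli are built from powers of $\alpha^{-1}$, whereas the paper uses $\A_\nu=\{\zeta: 1-b^\nu\leq|\zeta|<1-b^{\nu+1}\}$, i.e.\ powers of the constant $b$ from $s(r)=1-b(1-r)$. This is not cosmetic: for $z\in\A_\nu$ in your parametrization one has $1-r>\alpha^{-\nu-1}$, so $1-s(r)=b(1-r)>b\alpha\cdot\alpha^{-\nu-2}$, and the inclusion $s(r)\leq 1-\alpha^{-\nu-2}$ that you need for $\mu_\nu=n_j(1-\alpha^{-\nu-2})$ to count all relevant $a_m$ holds only if $b\alpha\geq 1$. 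Since $b\in(0,1)$ and $\alpha>1$ are independent given constants, this can fail. Using base $b$ (so that $1-s(r)\in(b^{\nu+2},b^{\nu+1}]$ and $\mu_\nu=n_j(1-b^{\nu+2})$ works exactly), or shifting the index by an integer depending on $b$ and $\alpha$, fixes this.

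Second, you never check that the Cartan discs lie inside $\D$ and exclude the origin; both are needed, since a $k$-set is by definition a collection of discs $\Delta(z_n,r_n)\subset\D$. The paper devotes a paragraph to this: writing $\zeta_{\nu,i},\xi_{\nu,i}$ for center and radius, one has $|\zeta_{\nu,i}|+\xi_{\nu,i}\leq 1-b^{\nu+1}+4d_\nu$, and \eqref{l-zero} gives $k(b^\nu)/b^\nu\to 0$, hence $4d_\nu<b^{\nu+1}$ for $\nu\geq\nu_0$ large (with a separate but easy argument when $k(x)=x$). Your remark that $d_\nu/\alpha^{-\nu}\to 0$ is exactly the right observation, but you deploy it only to note that \eqref{extra-assumption} holds, not to confine the discs to $\D$. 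The remainder of your argument, including the use of \eqref{dumpling-cond} with $\gamma=b$ (made uniform by taking $\nu_0$ large) to pass from $k(b^{\nu+1}-2d_\nu)$ to $k(b^\nu)$ in the final series, matches the paper.
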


We note that \cite[Theorem~4]{Chyzhykov0} is in the spirit of Corollary~\ref{k-thm-D} but
the estimate in it is obtained by other means and is valid outside of a fixed exceptional set that corresponds to the case $k(x)=x$. Meanwhile, the estimate in \eqref{k-thm-assertion-D} depends on the
size of the exceptional set determined by~$k$.

\bigskip
\noindent
\emph{Proof of Corollary~\ref{k-thm-D}.}
Let $\{a_m\}$ denote the sequence of zeros and poles of $f^{(j)}$ listed according to
multiplicity and ordered by increasing modulus. We follow the proof of 
\cite[Theorem~3.1]{Chyzhykov}, which in turn is reminiscent to the 
proof of \cite[Theorem~3]{Gundersen}. In the unit disc case one just
has to be extra careful so that the Cartan discs still remain in $\D$. Set
    $$
    \mu_\nu=n_j(1-b^{\nu+2})\quad\textnormal{and}\quad
    d_\nu=\frac{k(b^\nu)}{\nu^{\alpha}(-\log b)^\alpha},
    $$
where $\nu\geq \nu_0$ is an integer. We make this choice for the constants $d_\nu$
independently on whether $k(x)=x$ or not. We also suppose that $z$ is confined to the annulus $\A_\nu=\{\zeta:1-b^\nu\leq |\zeta|< 1-b^{\nu+1}\}$, which is equivalent to saying that
	$$
	\nu(-\log b)\leq-\log(1-r)< (\nu+1)(-\log b),\quad |z|=r.
	$$

We now make use of Cartan's lemma as in \cite{Chyzhykov}. Indeed, if $z$ lies outside of Cartan's discs, and if $\nu_0$ is assumed to be large
enough so that $\log \mu_\nu\geq 1$ for all $\nu\geq\nu_0$, then the following analogue of (6.5)--(6.7) in \cite{Chyzhykov} holds:
    \begin{eqnarray*} \label{ineq 1}
    \sum_{|a_m|\leq s(r)}\frac{1}{|z-a_m|}
    &\leq& \sum_{m=1}^{\mu_\nu}\frac{1}{|z-b_m|}<\frac{\mu_\nu \nu^{\alpha}(-\log b)^\alpha}{k(b^\nu)}\sum_{m=1}^{\mu_\nu}\frac{1}{m}\\
    &\leq& \frac{\mu_\nu \nu^{\alpha}(-\log b)^\alpha}{k(1-r)}(1+\log \mu_\nu)\\
    &\leq& \frac{\mu_\nu(1+\log \mu_\nu)}{k(1-r)}\log^\alpha\frac{1}{1-r}\\\
    &\leq& 2\frac{n_j(1-b^2(1-r))}{k(1-r)}\log n_j(1-b^2(1-r))\log^\alpha\frac{1}{1-r}.
    \end{eqnarray*}
The assertion \eqref{k-thm-assertion-D} follows from this and from \cite[(6.1)]{Chyzhykov}.
Thus \eqref{k-thm-assertion-D} holds for a fixed $\nu\geq \nu_0$ when $z$ lies outside of Cartan's discs.

Similarly as in \cite{Chyzhykov}, we now consider all $\nu\geq\nu_0$,
where $\nu_0\geq 1$ is large enough.
Let $A_{\nu,1},\ldots,A_{\nu,l_\nu}$ denote precisely those Cartan
discs that intersect the annulus $\A_\nu.$
We mention that for some $\nu$ there might be no discs of this type.
It follows from Cartan's lemma that for each $\nu$ the total sum
of the diameters of the discs $A_{\nu,1},\ldots,A_{\nu,l_\nu}$
cannot exceed $4d_\nu$.
Let $\zeta_{\nu,i}$ and $\xi_{\nu,i}$ denote the center and radius of
the disc $A_{\nu,i}$, respectively. It is easy to see that 
	$$
	|\zeta_{\nu,i}|+\xi_{\nu,i}\leq 1-b^{\nu+1}+2\xi_{\nu,i}
	\leq 1-b^{\nu+1}+4d_\nu.
	$$
If $k$ is not the identity mapping, then from \eqref{l-zero}, we infer
	$$
	\lim_{\nu\to\infty}\frac{k(b^\nu)}{b^\nu}
	=\lim_{\nu\to\infty}\frac{k(1-(1-b^\nu))}{1-(1-b^\nu)}
	=0.
	$$
Thus we may choose $\nu_0$ large enough so that $4d_\nu<b^{\nu+1}$
for all $\nu\geq \nu_0$. This inequality holds also in the case when
$k(x)=x$, but possibly for a different $\nu_0$. Now $|\zeta_{\nu,i}|+\xi_{\nu,i}<1$ for all $\nu\geq\nu_0$, which means that $A_{\nu,1},\ldots,A_{\nu,l_\nu}\subset\D$ for all $\nu\geq\nu_0$. Finally,
we choose a larger $\nu_0$, if necessary, so that $b^{\nu+1}<1-b^\nu$
for all $\nu\geq\nu_0$. Then $1-b^\nu>4d_\nu$ for all $\nu\geq\nu_0$,
which means that the origin lies outside of the discs $A_{\nu,1},\ldots,A_{\nu,l_\nu}$ for all $\nu\geq\nu_0$.

Now let $D_n=D(z_n,r_n)$ denote the sequence $\{D_n\}$ of all the
discs $A_{\nu,i}$, where $\nu\geq\nu_0$ and $1\leq i\leq l_\nu$.
We have proved that \eqref{k-thm-assertion-D} holds if $z$ lies
outside of the discs $D_n$ and $\{\zeta:|\zeta|\leq 1-b^{\nu_0}\}$.
Moreover, since $2d_\nu/b^\nu\to 0$ as $\nu\to\infty$, we use
\eqref{dumpling-cond} for $\gamma=b$ to find a constant $M>0$ 
that depends on
$b,\nu_0$ and on the constant $\tau$ in \eqref{dumpling-cond} such that
	\begin{eqnarray*}
	\sum_{n=1}^\infty\frac{r_n}{k(1-|z_n|)}
	&=& \sum_{\nu=\nu_0}^\infty \sum_{D_n\cap \A_\nu\neq\emptyset}\frac{r_n}{k(1-|z_n|)}\leq\sum_{\nu=\nu_0}^\infty\frac{2d_\nu}{k(b^{\nu+1}-2d_\nu)}\\
	&=&\sum_{\nu=\nu_0}^\infty\frac{2d_\nu}{k(b^{\nu}(b-2d_\nu/b^{\nu}))}
	\leq \sum_{\nu=\nu_0}^\infty\frac{M}{\nu^\alpha}
	<\infty.
	\end{eqnarray*}
This completes the proof. \hfill$\Box$

\subsection{Logarithmic differences and $q$-differences}

A difference counterpart to Gundersen's pointwise estimates for logarithmic derivatives due to Chiang and Feng, see \cite[Theorem~8.2]{C-F}, is also well-known. The estimate holds for all $z$ such that $|z|$ lies outside of an exceptional set of finite logarithmic measure. For a $K$-version of it, all we need is to change the estimate (8.6) in \cite{C-F} by the  reasoning used in proving Theorem~\ref{k-thm}, 
and use the same constants $d_\nu$. We state the result
formally but omit the proof.

\begin{corollary} \label{kk-thm}
Let $f\not\equiv 0$ be a meromorphic function in $\C$, 
and let $\alpha >1$ and $c\in\C$ be given constants. Suppose that 
$K:[0,\infty)\to [0,\infty)$ is as in Corollary~\ref{k-thm}.
Then there exists a $K$-set $U$ of discs $D(z_n,r_n)$ satisfying \eqref{logsum} and a constant $C>0$ depending only on $\alpha$ and $c$ such that for all $z\not\in U$, we have the following estimate (where $r=|z|$):
    \begin{equation}\label{difference}
	\left|\log\left|\frac{f(z+c)}{f(z)}\right|\right|
	\leq C\left(\frac{T(\alpha r,f)}{r}
	+n(\alpha r)\log^+n(\alpha r)\frac{\log^{\alpha}r}{K(r)}\right).
	\end{equation}
Here $n(t)$ is the number of zeros and poles of $f$ in the disc $\{\zeta:|\zeta|\leq t\}$, counting multiplicities.
    \end{corollary}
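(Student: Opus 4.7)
The plan is to follow the proof of Chiang and Feng's \cite[Theorem~8.2]{C-F} verbatim, except that the Cartan-lemma step used to control the sum over zeros and poles of $f$ is replaced by the refined version employed in the proof of Corollary~\ref{k-thm}. The authors already indicate this recipe in the sentence preceding the statement.

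First, I would invoke the Poisson--Jensen bound from \cite{C-F} which, for $r=|z|$ large and $|c|$ fixed, yields an inequality of the form
$$
\left|\log\left|\frac{f(z+c)}{f(z)}\right|\right|
\leq C_1\,\frac{T(\alpha r,f)}{r}+C_2\!\!\sum_{|a_m|\leq \alpha r}\frac{1}{|z-a_m|},
$$
where $\{a_m\}$ is the union of the zero and pole sequences of $f$ counted with multiplicity, and $C_1,C_2$ depend only on $\alpha$ and $c$. The first term is already part of the desired bound, so all the work goes into estimating the Riesz-type sum.

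Second, I would apply Cartan's lemma to the sum $\sum_{|a_m|\leq \alpha r}\frac{1}{|z-a_m|}$ with exactly the same choice of radii
$$
\mu_\nu=n(\alpha^{\nu+2}),\qquad d_\nu=\frac{K(\alpha^\nu)}{(\log\alpha^\nu)^\alpha},
$$
used in the proof of Corollary~\ref{k-thm}, and confine $z$ to the annulus $\A_\nu=\{\zeta:\alpha^\nu\leq|\zeta|<\alpha^{\nu+1}\}$. Repeating the chain of inequalities leading to the estimate in the proof of Corollary~\ref{k-thm} gives, for $z$ outside the Cartan discs,
$$
\sum_{|a_m|\leq \alpha r}\frac{1}{|z-a_m|}
\leq C_3\,n(\alpha r)\log^{+}\! n(\alpha r)\,\frac{\log^{\alpha}r}{K(r)},
$$
after a harmless shrinkage of $\alpha$ (replacing $\alpha$ by $\alpha^{1/2}$ throughout the annulus construction and relabelling) so that the factor $n(\alpha^{2}r)$ arising naturally from Cartan's lemma becomes $n(\alpha r)$. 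Combining this with the Poisson--Jensen estimate above yields \eqref{difference}.

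Third, to verify that the exceptional set is a $K$-set, let $\{D_n=D(z_n,r_n)\}$ denote the union of all Cartan discs that meet some $\A_\nu$ with $\nu\geq\nu_0$ (the finitely many discs inside $\{|\zeta|\leq\alpha^{\nu_0}\}$ are absorbed harmlessly). Because the total sum of diameters in each $\A_\nu$ is at most $4d_\nu$, and because $K(\alpha^\nu-2d_\nu)\geq K(\alpha^\nu/2)\geq K(\alpha^\nu)/\alpha$ by the doubling condition \eqref{double}, the same computation as at the end of the proof of Corollary~\ref{k-thm} gives
$$
\sum_{n}\frac{r_n}{K(|z_n|)}
\leq \sum_{\nu\geq\nu_0}\frac{2\alpha d_\nu}{K(\alpha^\nu)}
=\sum_{\nu\geq\nu_0}\frac{2\alpha}{\nu^{\alpha}(\log\alpha)^{\alpha}}<\infty,
$$
so \eqref{logsum} holds. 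The only technical friction I anticipate is the bookkeeping needed to reconcile the various $\alpha$-factors (the $\alpha^{\nu+2}$ in $\mu_\nu$, the $\alpha r$ in the statement, and the factor inherited from the doubling step), but, as in Gundersen's original argument, each of these losses is absorbed into the constant $C$ after replacing $\alpha$ by a slightly smaller constant before starting the argument.
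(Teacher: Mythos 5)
Your proposal is correct and follows exactly the route the paper itself prescribes: the paper omits the proof, stating only that one modifies estimate (8.6) of Chiang--Feng \cite{C-F} by the Cartan-lemma reasoning from the proof of Corollary~\ref{k-thm} with the same constants $d_\nu$, which is precisely what you carry out. Your additional remarks (the relabelling of $\alpha$ to pass from $n(\alpha^2 r)$ to $n(\alpha r)$, and the verification of \eqref{logsum} for the union of Cartan discs) are the same devices already used in the written-out proof of Corollary~\ref{k-thm}, so there is nothing genuinely different here.
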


A $q$-difference counterpart to Gundersen's pointwise estimates
was discovered by Wen and Ye in \cite{WY}. For a $K$-version of it, all we need is to change the estimate (3.9) in \cite{WY} by the  reasoning used in proving Theorem~\ref{k-thm}, and use the same constants $d_\nu$, and
then \cite[Lemma~3.4]{WY} yields the following analogue of \eqref{difference}: 
	\begin{eqnarray*}
	\log\frac{f(qz)}{f(z)} 
	&=& (q-1)z\frac{f'(z)}{f(z)}
	+O\left(\frac{|q-1|^2m(r^2)}{r^2}\right)
	+O\left(\frac{|q-1|^2 n(r^2)}{r^2}\right)\\
	&& +O\left(rn(\alpha r) \log^
	+n_j(\alpha r)\frac{\log^\alpha r}{K(r)}\right)+O(1),
	\end{eqnarray*}
where $m(r)=m(r,f)+m(r,1/f)$. The details are omitted.

\subsection{Exceptional sets}\label{except-sets}

Suppose that $K:(0,\infty)\to (0,\infty)$ is one of the following
three types of functions: (1) $K(x)\equiv 1$, (2) $K(x)\equiv x$,
or (3) $1\leq K(x)\leq x$ and $K$ is strictly increasing, continuous, concave function satisfying the doubling condition \eqref{double}.
Suppose further that $E\subset [1,\infty)$ satisfies $\int_E\frac{dx}{K(x)}<\infty$. The cases (1) and (2) correspond to finite linear measure
and finite logarithmic measure, respectively. It is clear that in all three cases,
	$$
	\lim_{r\to\infty}\frac{K(r)}{r}\int_{E\cap [r,\infty)}\frac{dx}{K(x)}=0.
	$$
Define the $K$-density of the set $E$ relative to a given function $\varepsilon:(0,\infty)\to (0,1]$ by
	$$
	\delta(K,\varepsilon)(E)=
	\limsup_{r\to\infty}\frac{\frac{K(r)}{r}\int_{E\cap [r,\infty)}\frac{dx}{K(x)}}{\varepsilon(r)}.
	$$
If $\delta(K,\varepsilon)(E)\in (0,\infty)$, then it is clear that $\varepsilon(r)$ tends to zero (along a sequence of values $r$), 
but not arbitrarily fast.

Special cases of Lemma~\ref{tech-lemma} below allow us to avoid exceptional sets of finite linear measure ($K(x)\equiv 1$) or of finite logarithmic measure ($K(x)\equiv x$). The proof
is an easy modification of the original results, see \cite[Lemma~C]{Bank} and \cite[Lemma~5]{Gundersen1}, but the result improves the existing estimates in the sense that a constant $\alpha>1$ is replaced by a function $\alpha(r)>1$ that tends to 1. Indeed, in the original results the function $\veps(r)$ is a constant function $\veps(r)\equiv \varepsilon=\alpha-1$, in 
which case $\delta(K,\varepsilon)(E)=0$ trivially holds. The $K$-density
introduced above controls how fast $\alpha(r)$ tends to 1.

\begin{lemma}\label{tech-lemma}
Let $g(r)$ and $h(r)$ be non-decreasing functions on $(0,\infty)$, and 
suppose that $g(r)\leq h(r)$ for all $r\in (0,\infty)\setminus E$, 
where $\delta(K,\varepsilon)(E)<1/\alpha$, and $K$ and $\varepsilon$ are as above, while $\alpha>0$
is the constant from \eqref{double}. Denote
$\alpha(r)=1+\varepsilon(r)$. Then there exists an $R\geq 1$ such that
$g(r)\leq h(\alpha(r)r)$ for all $r\geq R$.
\end{lemma}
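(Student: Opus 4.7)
The approach is the standard Bank--Gundersen contradiction scheme, adapted so that the constant $\alpha>1$ on the right-hand side is replaced by the variable quantity $\alpha(r)=1+\varepsilon(r)$, with the $K$-density hypothesis controlling how fast $\alpha(r)\to 1$ is allowed to be.

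First, suppose for contradiction that the conclusion fails, so that there is a sequence $r_k\to\infty$ with $g(r_k)>h(\alpha(r_k)r_k)$. By monotonicity of $g$ and $h$, any $s\in[r_k,\alpha(r_k)r_k]$ outside $E$ would give $g(s)\le h(s)\le h(\alpha(r_k)r_k)<g(r_k)\le g(s)$, a contradiction. Hence the entire interval $[r_k,\alpha(r_k)r_k]$ must be contained in $E$ for every $k$.

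The next step is to bound $\int_{r_k}^{\alpha(r_k)r_k}\frac{dx}{K(x)}$ from below. Since $\varepsilon$ takes values in $(0,1]$, we have $\alpha(r_k)r_k\le 2r_k$, and $K$ is non-decreasing in each of the three admissible cases, so the doubling condition \eqref{double} yields $K(x)\le K(2r_k)\le \alpha K(r_k)$ uniformly on $[r_k,\alpha(r_k)r_k]$ once $r_k\ge R$. Therefore
\[
\int_{E\cap[r_k,\infty)}\frac{dx}{K(x)}\ \ge\ \int_{r_k}^{\alpha(r_k)r_k}\frac{dx}{K(x)}\ \ge\ \frac{\varepsilon(r_k)r_k}{\alpha K(r_k)},
\]
which rearranges to
\[
\frac{\tfrac{K(r_k)}{r_k}\int_{E\cap[r_k,\infty)}\frac{dx}{K(x)}}{\varepsilon(r_k)}\ \ge\ \frac{1}{\alpha}.
\]
Passing to $\limsup$ as $k\to\infty$ forces $\delta(K,\varepsilon)(E)\ge 1/\alpha$, contradicting the hypothesis $\delta(K,\varepsilon)(E)<1/\alpha$.

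Consequently, for all sufficiently large $r$, the interval $[r,\alpha(r)r]$ must contain some $r'\notin E$, and monotonicity then gives $g(r)\le g(r')\le h(r')\le h(\alpha(r)r)$, which is the desired estimate. I do not anticipate any structural obstacle: the essential content is the sharp matching between the $1/\alpha$ factor produced by the doubling condition and the threshold $1/\alpha$ built into the $K$-density hypothesis. The only care required is to choose $R$ large enough that \eqref{double} applies at every $r_k\ge R$; this explains why the precise value of the doubling constant appears in the statement.
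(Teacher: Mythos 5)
Your proposal is correct and rests on exactly the same key inequality chain as the paper's proof, namely $\int_r^{\alpha(r)r}\frac{dx}{K(x)}\geq \frac{\varepsilon(r)r}{K(2r)}\geq\frac{\varepsilon(r)r}{\alpha K(r)}$ combined with the $K$-density threshold $1/\alpha$; the only difference is that you argue by contradiction along a sequence $r_k$, whereas the paper unwinds the $\limsup$ hypothesis directly to show $[r,\alpha(r)r]\setminus E\neq\emptyset$ for all large $r$. This is a purely organizational difference, not a different route.
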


\begin{proof}
There exists a constant $R\geq 1$ such that the doubling condition
\eqref{double} holds for all $x\geq R$, and that
	$$
	\varepsilon(r)>\alpha\frac{K(r)}{r}\int_{E\cap [r,\infty)}\frac{dx}{K(x)},\quad r\geq R.
	$$
Thus, if $r\geq R$,
	$$
	\int_r^{\alpha(r)r}\frac{dx}{K(x)}\geq \frac{(\alpha(r)-1)r}{K(\alpha(r)r)}\geq \frac{\veps(r)r}{K(2r)}\geq \frac{\veps(r)r}{\alpha K(r)}>\int_{E\cap [r,\infty)}\frac{dx}{K(x)}.
	$$
It follows that $[r,\alpha(r)r]\setminus E\neq\emptyset$ for all
$r\geq R$. So, if $r\geq R$ is arbitrary, we may choose $t\in [r,\alpha(r)r]\setminus E$ such that
	$$
	g(r)\leq g(t)\leq h(t)\leq h(\alpha(r)r)
	$$
by the monotonicity of $g$ and $h$.
\end{proof}	
	
Next we suppose that $k:(0,1)\to (0,1)$ is either the identity
mapping or a strictly increasing, continuous and convex function 
satisfying \eqref{dumpling-cond} and \eqref{l-zero}.
Suppose further that $E\subset [0,1)$ satisfies $\int_E\frac{dx}{k(1-x)}<\infty$. Define the $k$-density of the set $E$ relative to a given 
function $b:(0,1)\to (0,1)$ by
	$$
	\delta(k,b)(E)=
	\limsup_{r\to 1^-}\frac{\frac{k(1-r)}{1-r}\int_{E\cap [r,1)}\frac{dx}{k(1-x)}}{1-b(r)}.
	$$
We state without a proof the following result which reduces to
\cite[Lemma~C]{Bank} in the case when $k(x)=x$ and when $b(r)=b$ is a
constant function.

\begin{lemma}\label{tech-lemma2}
Let $g(r)$ and $h(r)$ be non-decreasing functions on $(0,1)$, and 
suppose that $g(r)\leq h(r)$ for all $r\in (0,1)\setminus E$, 
where $\delta(k,b)(E)<1$, and $k$ and $b$ are as above. Denote
$s(r)=1-b(r)(1-r)$. Then there exists an $R\geq 0$ such that
$g(r)\leq h(s(r))$ for all $r\geq R$.
\end{lemma}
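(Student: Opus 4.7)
The plan is to mirror the proof of Lemma~\ref{tech-lemma} almost verbatim, adapted to the unit disc setting in which the ``boundary'' is at $r=1^-$ rather than $r=\infty$, and the dilation $r\mapsto\alpha(r)r$ is replaced by the contraction of $1-r$ given by $r\mapsto s(r)=1-b(r)(1-r)$. First, I would unpack the hypothesis $\delta(k,b)(E)<1$: by the very definition of $\delta(k,b)$ as a $\limsup$, there exists an $R\in[0,1)$ such that
\[
\frac{k(1-r)}{1-r}\int_{E\cap[r,1)}\frac{dx}{k(1-x)}<1-b(r),\qquad r\geq R.
\]

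Second, I would produce a lower bound for $\int_{r}^{s(r)}dx/k(1-x)$. Since $k$ is strictly increasing on $(0,1)$ and $1-x\leq 1-r$ for every $x\in[r,s(r)]$, we have $1/k(1-x)\geq 1/k(1-r)$ throughout this interval; combined with the elementary identity $s(r)-r=(1-b(r))(1-r)$, this gives
\[
\int_{r}^{s(r)}\frac{dx}{k(1-x)}\;\geq\;\frac{(1-b(r))(1-r)}{k(1-r)}.
\]
Putting the two displays together, for every $r\geq R$ the $dx/k(1-x)$-measure of $[r,s(r)]$ strictly exceeds that of $E\cap[r,1)$, so $[r,s(r)]\setminus E$ is non-empty. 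Picking any $t$ in this set and using the hypothesis $g(t)\leq h(t)$ together with the monotonicity of $g$ and $h$ yields the chain
\[
g(r)\leq g(t)\leq h(t)\leq h(s(r)),
\]
which is the desired conclusion.

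The main point to note --- rather than a genuine obstacle --- is that no use is made of the doubling-type condition \eqref{dumpling-cond} on $k$. In Lemma~\ref{tech-lemma} the constant $\alpha$ from \eqref{double} appears because $K$ has to be compared at $2r$ versus $r$, the factor $2$ arising from the gap between the right endpoint $\alpha(r)r$ of the test interval and the right endpoint $\infty$ of the integration region. Here the interval $[r,s(r)]$ sits flush against the region $[r,1)$ where $1-x\leq 1-r$, so the trivial monotonicity bound on $1/k(1-x)$ already delivers what is needed. Consequently the proof is strictly simpler than that of Lemma~\ref{tech-lemma}, and the only care required is that $b(r)<1$ (part of the standing hypothesis on $b$) so that $(1-b(r))(1-r)>0$ and hence $s(r)>r$.
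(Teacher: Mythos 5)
Your proof is correct. The paper in fact states Lemma~\ref{tech-lemma2} without proof, so the only available comparison is with its plane analogue, Lemma~\ref{tech-lemma}, and your argument is exactly the intended adaptation: extract from $\delta(k,b)(E)<1$ the pointwise inequality near $r=1$, bound $\int_r^{s(r)}\frac{dx}{k(1-x)}$ from below, conclude $[r,s(r)]\setminus E\neq\emptyset$, and finish by monotonicity. Your side remark is also accurate and explains a feature of the statement itself: because $x\mapsto 1/k(1-x)$ is \emph{increasing} on $[r,s(r)]$, the trivial bound $\int_r^{s(r)}\frac{dx}{k(1-x)}\geq\frac{(1-b(r))(1-r)}{k(1-r)}$ already matches the normalization in $\delta(k,b)$, so no doubling constant is needed --- which is precisely why the hypothesis here reads $\delta(k,b)(E)<1$ rather than $<1/\alpha$ as in Lemma~\ref{tech-lemma}.
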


\footnotesize

\bigskip

\noindent
\emph{J.~Ding}\\
\textsc{Taiyuan University of Technology,
Department of Mathematics,
Yingze West Street, No.~79, Taiyuan 030024, China}\\
\texttt{e-mail:dingjie@tyut.edu.cn}

\medskip
\noindent
\emph{J.~Heittokangas}\\
\textsc{University of Eastern Finland, Department of Physics and Mathematics,
P.O.~Box 111, 80101 Joensuu, Finland}\\
\textsc{Taiyuan University of Technology,
Department of Mathematics,
Yingze West Street, No.~79, Taiyuan 030024, China}\\
\texttt{email:janne.heittokangas@uef.fi}

\medskip
\noindent
\emph{Z.-T.~Wen}\\
\textsc{Shantou University, Department of Mathematics,
Daxue Road No.~243, Shantou 515063, China}\\
\textsc{Taiyuan University of Technology,
Department of Mathematics,
Yingze West Street, No.~79, Taiyuan 030024, China}\\
\texttt{e-mail:zhtwen@stu.edu}


\begin{thebibliography}{99}

\bibitem{Ahern}
	Ahern P.~R.~and D.~N.~Clark,
	\emph{On inner functions with $H^p$ derivative}.
	Michigan Math.~J.~\textbf{21} (1974), 115--127.

\bibitem{Bank}
	Bank S.,
	\emph{A general theorem concerning the growth of solutions of first-order 
	algebraic differential equations}.
	Compositio Math.~\textbf{25} (1972), no.~1, 61--70.

\bibitem{C-F}
	Chiang Y.-M.~and S.-J.~Feng,
	\emph{On the Nevanlinna characteristic of $f(z+c)$ and difference
	equations in the complex plane}.
	Ramanujan J.~\textbf{16} (2008), 105--129.

\bibitem{Chyzhykov0}
	Chyzhykov I., 
	\emph{Sharp logarithmic derivative estimates for meromorphic functions}.
	 Mat.~Stud.~\textbf{27} (2007), no.~2, 120--138. 

\bibitem{Chyzhykov}
	Chyzhykov I., G.~G.~Gundersen and J.~Heittokangas,
	\emph{Linear differential equations and logarithmic derivative estimates}.
	Proc.~London Math.~Soc.~\textbf{86} (2003), no.~3, 735--754.

\bibitem{Girela0}
	Girela D.~and J.~A.~Pel\'aez,
	\emph{On the membership in Bergman spaces of the derivative of a Blaschke product
	with zeros in a Stolz domain}.
	Canad.~Math.~Bull.~\textbf{49} (2006), no.~3, 381--388.

\bibitem{Girela}
	Girela D., J.~A.~Pel\'aez and D.~Vukoti\'c,
	\emph{Interpolating Blaschke products: Stolz and Tangential approach regions}. 
	Constr.~Approx.~\textbf{37} (2008), no.~2, 203--216.

\bibitem{Gundersen}
	Gundersen G.~G.,
	\emph{Estimates for the logarithmic derivative of a meromorphic function, plus similar estimates}.
	J.~London Math.~Soc.~\textbf{37} (1988), no.~2, 88--104.

\bibitem{Gundersen1}
	Gundersen G.~G.,
	\emph{Finite order solutions of second order linear differential equations}.
	Trans.~Amer.~Math.~Soc.~\textbf{305} (1988), no.~1, 415--429.

\bibitem{Hayman0}
	Hayman W.,
	\emph{Slowly growing integral and subharmonic functions}.
	Comment.~Math.~Helv.~\textbf{34} (1960), 74--84.

\bibitem{Hayman}
	Hayman W.,
	\emph{Meromorphic Functions}. 
	Oxford Mathematical Monographs Clarendon Press, Oxford, 1964.
	
\bibitem{HITW}
	Heittokangas J., K.~Ishizaki, K.~Tohge and Z.-T.~Wen,
	\emph{Zeros and division results for exponential polynomials}.
	Israel Math.~J.~\textbf{227} (2018), 397--421.

\bibitem{HW}
	Heittokangas J.~and Z.-T.~Wen,
	\emph{Generalization of P\'olya's zero distribution theory
	for exponential polynomials, plus sharp results for asymptotic growth}.
	arXiv:1905.08919.

\bibitem{Hinkkanen}
	Hinkkanen A.,
	\emph{A sharp form of Nevanlinna's second fundamental theorem}.
	Invent.~Math.~\textbf{108} (1992), 549--574.
	
\bibitem{Laine}
	Laine I.,
	\emph{Nevanlinna Theory and Complex Differential Equations}.
	Walter de Gruyter, Berlin-New York, 1993.

\bibitem{Stein}
	Steinmetz N.,
	\emph{Zur Wertverteilung von Exponentialpolynomen}.
	Manuscripta Math.~\textbf{26} (1978/79), no.~1--2, 155--167.

\bibitem{WY}
	Wen Z.-T.~and Z.~Ye,
	\emph{Wiman-Valiron theorem for $q$-differences}.
	Ann.~Acad.~Sci.~Fenn. \textbf{41} (2016), 305--312.

\bibitem{Ye}
	Ye Z.,
	\emph{On Nevanlinna's error terms}.
	Duke Math.~J.~\textbf{64} (1991), 243--260.

\end{thebibliography}
\end{document}